\documentclass[preprint,12pt]{elsarticle}

\usepackage{amsmath,amssymb,amsthm}
\usepackage{mathtools}
\usepackage{microtype}
\usepackage{xcolor}
\usepackage{enumitem}

\usepackage[colorlinks=true,
            linkcolor=blue!60!black,
            citecolor=blue!60!black,
            urlcolor=blue!60!black]{hyperref}

\journal{Discrete Mathematics}

\theoremstyle{plain}
\newtheorem{theorem}{Theorem}[section]
\newtheorem{lemma}[theorem]{Lemma}
\newtheorem{proposition}[theorem]{Proposition}
\newtheorem{corollary}[theorem]{Corollary}
\newtheorem{conjecture}[theorem]{Conjecture}

\theoremstyle{definition}
\newtheorem{definition}[theorem]{Definition}

\newtheorem{remark}[theorem]{Remark}

\newcommand{\R}{\mathbb{R}}

\newcommand{\tr}{\operatorname{tr}}

\newcommand{\dedit}{d_{\mathrm{edit}}}

\begin{document}

%% ---------------------------------------------------------------
%%  Front matter
%% ---------------------------------------------------------------
\begin{frontmatter}

\title{The Bollob\'{a}s--Nikiforov Conjecture for Complete Multipartite
       Graphs and Dense $K_4$-Free Graphs}

\author[inst1]{[Piero Giacomelli]\corref{cor1}}
\ead{pgiacome@gmail.com}
\cortext[cor1]{Corresponding author.}
\address[inst1]{IT Department, TENAX GROUP SPA, Verona, Italy}

\begin{abstract}
The Bollob\'as--Nikiforov conjecture asserts that for any graph
$G \neq K_n$ with $m$ edges and clique number $\omega(G)$,
\[
  \lambda_1^2(G) + \lambda_2^2(G)
  \;\leq\;
  2\!\left(1 - \frac{1}{\omega(G)}\right)m,
\]
where $\lambda_1(G) \geq \lambda_2(G) \geq \cdots \geq \lambda_n(G)$
are the adjacency eigenvalues of $G$.
We prove the conjecture for all complete multipartite graphs
$K_{n_1,\ldots,n_r}$ with $n_1 + \cdots + n_r > r$.
The proof computes the full spectrum via a secular equation, establishes
that $\lambda_2 = 0$ whenever the graph has more vertices than parts,
and then applies Nikiforov's spectral Tur\'an theorem.
For $r = 2$ equality holds for every complete bipartite graph $K_{a,b}$
with $a + b \geq 3$; for $r \geq 3$ equality holds if and only if all
parts have equal size.
We also collect and unify the known partial results for $K_4$-free graphs:
the conjecture holds whenever $\chi(G) \leq 3$ (via Lin--Ning--Wu and
Bollob\'as--Nikiforov for weakly perfect graphs), and for all regular
$K_4$-free graphs (via Zhang's theorem).
A stability result for dense $K_4$-free graphs shows that if the spectral
radius is near the Tur\'an maximum then the graph is structurally close
to the balanced complete tripartite graph, and as a consequence the
conjecture holds for all $K_4$-free graphs with $m = \Omega(n^2)$
when $n$ is sufficiently large.
Finally, we identify the precise obstruction preventing a Hoffman-bound
approach from settling the conjecture for $K_4$-free graphs with
independence number $\alpha(G) \geq n/3$.
\end{abstract}

\begin{keyword}
Spectral graph theory \sep Bollob\'as--Nikiforov conjecture \sep
$K_4$-free graphs \sep adjacency eigenvalues \sep
complete multipartite graphs \sep Tur\'an-type problems
\MSC[2020] 05C50 \sep 05C35 \sep 05C15
\end{keyword}

\end{frontmatter}

%% ============================================================
\section{Introduction}
\label{sec:intro}
%% ============================================================

A central theme in spectral graph theory is to bound combinations of
eigenvalues in terms of classical combinatorial parameters.
Nosal~\cite{Nosal1970} proved that the spectral radius satisfies
$\lambda_1(G) \leq \sqrt{m}$ for triangle-free graphs, with equality
if and only if $G = K_{n/2,n/2}$.
Nikiforov~\cite{Nikiforov2002} extended this to all graphs: for any
graph $G$ with $m$ edges and clique number $\omega(G) \geq 2$,
\begin{equation}
  \label{eq:nikiforov}
  \lambda_1(G)
  \;\leq\;
  \sqrt{2\!\left(1 - \frac{1}{\omega(G)}\right)m},
\end{equation}
with equality if and only if $G$ is a balanced complete
$\omega(G)$-partite graph on $n$ vertices with $\omega(G) \mid n$.
Inequality~\eqref{eq:nikiforov} is the \emph{spectral Tur\'an theorem},
since the extremal graph is the Tur\'an graph $T(n,\omega(G))$.

\paragraph{The Bollob\'as--Nikiforov conjecture.}
Bollob\'as and Nikiforov~\cite{BollobasNikiforov2007} proposed
strengthening \eqref{eq:nikiforov} by simultaneously bounding
$\lambda_1^2 + \lambda_2^2$.

\begin{conjecture}[Bollob\'{a}s--Nikiforov~\cite{BollobasNikiforov2007}]
  \label{conj:BN}
  For any graph $G \neq K_n$ with $m$ edges and clique number
  $\omega(G) \geq 2$,
  \[
    \lambda_1^2(G) + \lambda_2^2(G)
    \;\leq\;
    2\!\left(1 - \frac{1}{\omega(G)}\right)m.
  \]
  The equality cases are:
  \begin{enumerate}[label=(\alph*)]
    \item $\omega(G) = 2$: equality holds for every complete bipartite
          graph $K_{a,b}$ with $a + b \geq 3$.
    \item $\omega(G) \geq 3$: equality holds if and only if $G = T(n,\omega(G))$
          is the balanced complete $\omega(G)$-partite graph with all parts of
          equal size.
  \end{enumerate}
\end{conjecture}

The exclusion of $G = K_n$ is necessary.  For the complete graph $K_n$,
one has $\lambda_1 = n-1$ and $\lambda_2 = -1$, giving
$\lambda_1^2 + \lambda_2^2 = (n-1)^2 + 1 > (n-1)^2 = 2(1-1/n)\binom{n}{2}\cdot 2/n$;
a direct computation confirms that $K_n$ violates the bound.
The conjecture is sharp: the balanced complete $r$-partite graph
$T(n,r)$ with $r \mid n$ satisfies $\lambda_2 = 0$
(proved in Theorem~\ref{thm:multipartite} below) and
$\lambda_1^2 = 2(1-1/r)m$, so equality holds throughout.

\paragraph{Known cases.}
Several special cases of Conjecture~\ref{conj:BN} have been established.
Lin, Ning, and Wu~\cite{LinNingWu2021} confirmed the conjecture for all
triangle-free graphs ($\omega = 2$), with equality exactly at $K_{n/2,n/2}$.
Bollob\'as and Nikiforov~\cite{BollobasNikiforov2007} proved it for all
weakly perfect graphs (graphs satisfying $\chi(G) = \omega(G)$), which
in particular settles the $K_4$-free case whenever $\chi(G) \leq 3$.
Zhang (see~\cite{ElphickWocjan2014}) established the conjecture for
all regular graphs.
Kumar and Pragada~\cite{KumarPragada2024} recently proved it for graphs
containing at most $O(m^{3/2-\varepsilon})$ triangles for any fixed
$\varepsilon > 0$.
Liu and Bu~\cite{LiuBu2025} showed the conjecture holds asymptotically
almost surely for Erd\H{o}s--R\'enyi random graphs.

\paragraph{The open case: dense $K_4$-free graphs.}
A $K_4$-free graph can contain as many as $\Theta(m^{3/2})$ triangles:
the balanced tripartite graph $K_{n/3,n/3,n/3}$ has $\Theta(n^3)$
triangles and $m = \Theta(n^2)$ edges.
Consequently the result of Kumar and Pragada does not apply to $K_4$-free
graphs with $\chi(G) \geq 4$, and this is the principal remaining open case.

\paragraph{Our results.}
We establish the following new results.

\begin{theorem}[Complete multipartite graphs]
  \label{thm:multipartite}
  Let $G = K_{n_1,\ldots,n_r}$ with $r \geq 2$,
  $n = n_1 + \cdots + n_r \geq r+1$ (so $G \neq K_r$), and $m$ edges.
  Then
  \[
    \lambda_1^2(G) + \lambda_2^2(G)
    \;\leq\;
    2\!\left(1 - \frac{1}{r}\right)m,
  \]
  with the following equality characterization:
  \begin{enumerate}[label=(\alph*)]
    \item If $r = 2$: equality holds for \emph{every} complete bipartite graph
          $K_{n_1,n_2}$ with $n_1 + n_2 \geq 3$.
    \item If $r \geq 3$: equality holds if and only if $n_1 = \cdots = n_r$
          (i.e., $G = T(n,r)$ is the balanced complete $r$-partite graph).
  \end{enumerate}
\end{theorem}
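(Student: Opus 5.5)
The plan is to compute the spectrum of $G = K_{n_1,\ldots,n_r}$ explicitly enough to show that $\lambda_2(G) = 0$ whenever $n \geq r+1$. The inequality then reduces to Nikiforov's spectral Tur\'an bound \eqref{eq:nikiforov}, since $\omega(G) = r$.

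\textbf{Spectrum.} Write $A = J - \mathrm{diag}(J_{n_1},\ldots,J_{n_r})$.

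First, for each part $V_i$, any vector supported on $V_i$ with coordinate sum zero is annihilated by $A$. This gives the eigenvalue $0$ with multiplicity $\sum_i (n_i - 1) = n - r \geq 1$.

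The remaining $r$ eigenvalues come from vectors that are constant on parts, say $x = c_i$ on $V_i$. The eigen-equation reads $\lambda c_i = S - n_i c_i$ with $S = \sum_j n_j c_j$. When $\lambda \neq -n_i$ for all $i$, this gives $c_i = S/(\lambda + n_i)$, and so the secular equation is
\[
  \sum_{i=1}^r \frac{n_i}{\lambda + n_i} = 1 .
\]
The function $f(\lambda) = \sum_i n_i/(\lambda + n_i)$ is strictly decreasing on each interval between consecutive poles $-n_i$. It runs from $+\infty$ to $0$ on $(-\min_i n_i, \infty)$, so it has exactly one root there, namely $\lambda_1 > 0$. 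On each bounded interval between distinct poles it runs from $+\infty$ to $-\infty$, giving one root, which is negative since it lies below $-\min_i n_i$. Repeated part sizes $n_i = n_j$ instead contribute the eigenvalue $-n_i$ directly, via $c$ supported on those parts with $n_i c_i + n_j c_j = 0$. In every case the $r$ quotient eigenvalues consist of one positive value and $r-1$ values that are $\leq -\min_i n_i < 0$. Alternatively one can use $0 = \tr A = \lambda_1 + \sum(\text{negatives})$, but the interlacing count above is cleaner.

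Since $n > r$, the eigenvalue $0$ actually occurs, so $\lambda_2 = 0$. Then $\lambda_1^2 + \lambda_2^2 = \lambda_1^2 \leq 2(1 - 1/r)m$ by \eqref{eq:nikiforov}.

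\textbf{Equality.} Because $\lambda_2 = 0$, equality in the theorem is exactly equality in Nikiforov's bound. For $r \geq 3$, Nikiforov's characterization says this happens iff $G$ is the balanced complete $r$-partite graph, i.e.\ $n_1 = \cdots = n_r$. For $r = 2$ we compute directly: $\lambda_1 = \sqrt{n_1 n_2} = \sqrt{m}$ and $2(1 - 1/2)m = m$, so equality holds for every $K_{a,b}$ with $a+b \geq 3$.

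This direct computation exposes a tension. The stated equality case of \eqref{eq:nikiforov} ("iff balanced") is not literally correct for $\omega = 2$, since it would exclude unbalanced $K_{a,b}$, which also attain equality. So for $r = 2$ I will not cite Nikiforov's characterization but use the direct computation above. For $r \geq 3$ I will rely on the characterization as stated for the "only if" direction. If I want independence from it, a direct check is available: Lagrange/Cauchy–Schwarz on the Rayleigh quotient with part-constant vectors shows that $\lambda_1^2 = 2(1 - 1/r)m$ forces the Perron vector to be constant and the parts equal.

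\textbf{Main obstacle.} The only delicate point is the sign analysis of the secular equation when part sizes repeat. It must show that no quotient eigenvalue lies in $[0, \lambda_1)$, so that $\lambda_2$ is exactly $0$ and not some positive value. The interlacing argument handles this, because every non-Perron root lies below $-\min_i n_i$.
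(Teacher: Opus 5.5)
Your proof is correct and follows essentially the paper's route. Like the paper, you use the $(n-r)$-dimensional zero eigenspace of within-part zero-sum vectors and a secular-equation analysis showing exactly one positive quotient eigenvalue, with all others $\le -\min_i n_i$; this gives $\lambda_2=0$, after which Nikiforov's bound applies, with $r=2$ equality checked directly and $r\ge 3$ handled via Nikiforov's equality case. Your treatment of repeated part sizes, where difference vectors on equal-sized parts give the eigenvalue $-n_i$, is actually more accurate than the paper's, which wrongly assigns those vectors to the zero eigenspace; you should drop the parenthetical trace ``alternative,'' since $\operatorname{tr}A=0$ alone does not establish the sign pattern.
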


\begin{remark}
  Theorem~\ref{thm:multipartite} also follows from the result of
  Bollob\'as and Nikiforov~\cite{BollobasNikiforov2007} for weakly perfect
  graphs (graphs satisfying $\chi(G) = \omega(G)$), since every complete
  $r$-partite graph satisfies $\chi = \omega = r$.
  Our contribution is a self-contained proof via the secular equation
  that yields the exact eigenvalue structure of $K_{n_1,\ldots,n_r}$
  and gives the precise equality characterization above.
  In particular, the equality statement for $r = 2$ (every complete
  bipartite graph achieves equality) does not appear to be recorded
  explicitly in the literature.
\end{remark}

\begin{theorem}[$K_4$-free graphs with $\chi \leq 3$]
  \label{thm:3colorable}
  Let $G$ be a $K_4$-free graph with chromatic number $\chi(G) \leq 3$,
  $m$ edges, and $G \neq K_3$.  Then
  \[
    \lambda_1^2(G) + \lambda_2^2(G) \;\leq\; \frac{4m}{3}.
  \]
\end{theorem}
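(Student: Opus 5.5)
We will reduce the statement to the two results quoted in the introduction, splitting on the clique number $\omega = \omega(G)$. Since $G$ is $K_4$-free, $\omega \le 3$. If $\omega \le 1$, then $G$ is edgeless: $m = 0$, all eigenvalues vanish, and the inequality is trivial. Assume from now on that $G$ has at least one edge, so $\omega \ge 2$.

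\emph{Case $\omega = 3$.} Here $3 = \omega \le \chi \le 3$, so $\chi(G) = \omega(G) = 3$ and $G$ is weakly perfect. We apply the Bollob\'as--Nikiforov theorem for weakly perfect graphs~\cite{BollobasNikiforov2007}. It requires $G \neq K_n$, and a $K_4$-free complete graph with $\omega = 3$ is exactly $K_3$, which is excluded by hypothesis. The theorem then gives
\[
\lambda_1^2 + \lambda_2^2 \le 2\left(1 - \tfrac13\right)m = \tfrac{4m}{3}.
\]

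\emph{Case $\omega = 2$.} Now $G$ is triangle-free, and we use the Lin--Ning--Wu theorem~\cite{LinNingWu2021}. The only graphs it must exclude are complete ones, and the triangle-free complete graph with an edge is $K_2$. For $G \neq K_2$ it gives $\lambda_1^2 + \lambda_2^2 \le m \le 4m/3$. For $G = K_2$ (together with isolated vertices) we check directly: the eigenvalues are $1$, $-1$ and $0$ with multiplicity $n-2$. If $n \ge 3$, then $\lambda_2 = 0$ and the sum is $1 \le 4/3$. If $n = 2$, then $\lambda_2 = -1$ and the sum is $2 > 4/3$.

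So the graph $K_2$ itself genuinely violates the displayed bound, as does the edgeless $K_1$ read literally. The proof must therefore record that the statement is meant under the conjecture's standing convention $G \neq K_n$. Equivalently, the hypothesis ``$G \neq K_3$'' should be read as ``$G$ is not complete''; we would add this remark or amend the hypothesis accordingly.

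The main point needing care is this bookkeeping. We must make sure each cited theorem's hypothesis ``$G \neq K_n$'' is met in every case. We must also confirm that the weakly perfect result applies with $\omega = 3$ even when $G$ has isolated vertices or several components. Adding isolated vertices only appends zero eigenvalues, which do not change $\lambda_1^2 + \lambda_2^2$ except possibly by raising $\lambda_2$ from a negative value to $0$. We would double-check that this never breaks the bound, but otherwise the argument is a direct citation.
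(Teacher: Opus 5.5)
Your argument is the paper's own: split on $\omega$, then use Lin--Ning--Wu when $\omega=2$ and the weakly-perfect theorem when $\chi=\omega=3$. Your extra bookkeeping is correct and catches a real defect: $K_2$ satisfies every hypothesis of the statement yet has $\lambda_1^2+\lambda_2^2=2>4/3=4m/3$, which the paper's Case~1 overlooks by invoking Lin--Ning--Wu without checking its $G\neq K_n$ hypothesis, so the hypothesis should indeed read ``$G$ is not complete.'' Two small fixes: $K_1$ violates nothing ($m=0$ and there is no second eigenvalue, as your own $\omega\le 1$ paragraph says), and the isolated-vertex worry is settled at once, since appending zero eigenvalues can only replace a negative $\lambda_2$ by $0$, which lowers $\lambda_2^2$.
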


\begin{remark}
  Theorem~\ref{thm:3colorable} is not new: it follows immediately from
  Lin--Ning--Wu~\cite{LinNingWu2021} when $\omega(G) \leq 2$ and from
  Bollob\'as--Nikiforov~\cite{BollobasNikiforov2007} for weakly perfect
  graphs when $\omega(G) = \chi(G) = 3$.  We include it for completeness
  and as a unified summary of the $\chi \leq 3$ case.
\end{remark}

\begin{remark}
  Bollob\'as and Nikiforov~\cite{BollobasNikiforov2007} and
  Zhang (see~\cite{ElphickWocjan2014}) have proved Conjecture~\ref{conj:BN}
  for all \emph{regular} graphs.  Hence the $K_4$-free regular case is already
  resolved.  We record the precise statement for regular $K_4$-free graphs
  as a corollary of Zhang's result.
\end{remark}

\begin{corollary}[$K_4$-free regular graphs~\cite{ElphickWocjan2014}]
  \label{cor:regular}
  Let $G$ be a $d$-regular $K_4$-free graph on $n \geq 4$ vertices,
  $G \neq K_3$.  Then
  \[
    \lambda_1^2(G) + \lambda_2^2(G) \;\leq\; \frac{4m}{3}.
  \]
\end{corollary}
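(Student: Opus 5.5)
The plan is to read off Corollary~\ref{cor:regular} from Zhang's theorem for regular graphs \cite{ElphickWocjan2014}. That theorem states that every regular graph $G \neq K_n$ satisfies
\[
  \lambda_1^2(G) + \lambda_2^2(G) \le 2\left(1 - \frac{1}{\omega(G)}\right)m .
\]
The corollary then reduces to checking the hypotheses and comparing right-hand sides.

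\textbf{Step 1 (hypotheses).} We need $G \neq K_n$. Because $G$ is $K_4$-free, $G$ is not $K_n$ for any $n \ge 4$; the assumption $n \ge 4$ therefore already excludes $K_n$, and the explicit exclusion of $K_3$ only matters if one drops $n \ge 4$. We should also take $\omega(G)\ge 2$, so $m \ge 1$. If $d = 0$, both sides vanish and the inequality is trivial. $K_4$-freeness gives $\omega(G) \le 3$.

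\textbf{Step 2 (monotonicity).} The factor $2(1 - 1/\omega)$ is increasing in $\omega$. For $\omega \in \{2,3\}$ it is therefore at most $2(1-1/3) = 4/3$. Zhang's bound then gives
\[
  \lambda_1^2 + \lambda_2^2 \le 2\left(1 - \frac{1}{\omega}\right)m \le \frac{4m}{3}.
\]
As a sanity check on the triangle-free case, we could alternatively cite Lin--Ning--Wu, which gives $\lambda_1^2+\lambda_2^2 \le m$ directly.

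The only real obstacle is making sure the cited regular-graph result is stated for all regular graphs other than complete graphs, with no extra hypothesis on $n$ or on $d$. If there is doubt, I would fall back on a self-contained argument for $d$-regular graphs. Here $\lambda_1 = d$ and $m = nd/2$, so the goal becomes
\[
  \lambda_2^2 \le \frac{2nd}{3} - d^2 .
\]
I would combine three facts: the trace identity $\sum_i \lambda_i^2 = nd$; the Hoffman-type bound $\alpha \le n(-\lambda_n)/(d - \lambda_n)$; and the bound $\alpha \ge n/\chi$, or a Motzkin--Straus/clique-based estimate on $\lambda_2$ for $K_4$-free graphs. I expect this fallback to be delicate, so the intended proof is the citation route above.
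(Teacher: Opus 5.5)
Your proposal is correct and follows the paper's route: the paper also rewrites the target as $d^2+\lambda_2^2\le 2nd/3$ and reads it off from Zhang's theorem for regular graphs, and your explicit checks ($G\neq K_n$ because $n\ge 4$, the trivial case $d=0$, and monotonicity of $2(1-1/\omega)$ in $\omega$) spell out what the paper leaves implicit. Your fallback is unnecessary, and its Hoffman-plus-trace part would in any case run into the obstruction of Proposition~\ref{prop:obstruction}: even in the regular case that combination only yields $d^2+\lambda_2^2\le nd-d^2/4$, which never reaches $2nd/3$.
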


\begin{conjecture}[$K_4$-free graphs with large independence number]
  \label{conj:independence}
  Let $G$ be a $K_4$-free graph on $n$ vertices and $m$ edges with
  independence number $\alpha(G) \geq n/3$ and $G \neq K_3$.  Then
  \[
    \lambda_1^2(G) + \lambda_2^2(G) \;\leq\; \frac{4m}{3}.
  \]
\end{conjecture}

We present partial progress towards Conjecture~\ref{conj:independence},
identifying the exact obstruction that prevents the Hoffman-bound approach
from closing the argument.
We also formulate a spectral stability conjecture
(Conjecture~\ref{conj:stability}) for connected $K_4$-free graphs and show
that the connectivity hypothesis is necessary.

\paragraph{Proof strategy.}
Theorem~\ref{thm:multipartite} is proved in Section~\ref{sec:multipartite}
via a secular-equation analysis of the spectrum of $K_{n_1,\ldots,n_r}$;
the key step is showing that $\lambda_2(G) = 0$ whenever $n > r$.
Theorem~\ref{thm:3colorable} follows from existing results
in~\cite{BollobasNikiforov2007,LinNingWu2021} and is proved in
Section~\ref{sec:K4free}.
Corollary~\ref{cor:regular} follows from Zhang's theorem for regular graphs.
Conjecture~\ref{conj:stability} and its counterexample are presented in
Section~\ref{sec:K4free}, together with partial progress on
Conjecture~\ref{conj:independence} via the Hoffman bound~\cite{Hoffman1970}.
Section~\ref{sec:equality} characterizes the equality cases.
Section~\ref{sec:remarks} collects open problems.

%% ============================================================
\section{Preliminaries}
\label{sec:prelim}
%% ============================================================

\paragraph{Graph notation.}
All graphs are simple and undirected.
For a graph $G$ on vertex set $V(G)$ with $|V(G)| = n$ vertices and
$|E(G)| = m$ edges, write $N(v)$ for the open neighbourhood of $v$,
$d(v) = |N(v)|$ for its degree, and $\Delta(G)$, $\delta(G)$,
$\bar{d}(G) = 2m/n$ for the maximum, minimum, and average degrees.
The \emph{clique number} $\omega(G)$ is the order of the largest complete
subgraph; $\chi(G)$ is the chromatic number; $\alpha(G)$ is the
independence number.

The \emph{complete $r$-partite graph} $K_{n_1,\ldots,n_r}$ has vertex
set partitioned into $r$ independent sets (parts) of sizes
$n_1,\ldots,n_r$, with every two vertices in different parts adjacent.
Its clique number is $r$.
The \emph{Tur\'an graph} $T(n,r)$ is the unique balanced complete
$r$-partite graph on $n$ vertices, with parts of sizes
$\lfloor n/r \rfloor$ or $\lceil n/r \rceil$; it has
$e(T(n,r)) = (1-1/r)n^2/2 + O(n)$ edges.

\paragraph{Eigenvalues.}
The \emph{adjacency matrix} $A(G)$ is the symmetric $\{0,1\}$-matrix
indexed by $V(G)$ with $A_{uv} = 1$ iff $uv \in E(G)$.
Its eigenvalues are real and labelled
$\lambda_1(G) \geq \lambda_2(G) \geq \cdots \geq \lambda_n(G)$.
Since $A$ is a symmetric matrix with zero diagonal, its trace identities give
\begin{equation}
  \label{eq:trace}
  \sum_{i=1}^n \lambda_i = 0,
  \qquad
  \sum_{i=1}^n \lambda_i^2 = \tr(A^2) = 2m.
\end{equation}

The following results are used in the proofs.

\begin{theorem}[Nikiforov~\cite{Nikiforov2002}]
  \label{thm:nikiforov}
  For any graph $G$ with $m$ edges and $\omega(G) \geq 2$,
  \[
    \lambda_1(G)
    \;\leq\;
    \sqrt{2\!\left(1 - \frac{1}{\omega(G)}\right) m},
  \]
  with equality if and only if $G = T(n,\omega(G))$ for some $n$
  divisible by $\omega(G)$.
\end{theorem}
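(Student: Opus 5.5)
The plan is to follow Nikiforov's original route: combine the Rayleigh quotient with the Motzkin--Straus theorem through a single application of Cauchy--Schwarz. Let $\mathbf{x}=(x_1,\dots,x_n)$ be a unit Perron eigenvector of $A(G)$, so $x_i\ge 0$ and
\[
  \lambda_1 \;=\; \mathbf{x}^{T}A\mathbf{x} \;=\; 2\sum_{ij\in E(G)} x_i x_j .
\]
Cauchy--Schwarz over the $m$ edges gives
\[
  \lambda_1^2 \;\le\; 4m\sum_{ij\in E(G)} x_i^2x_j^2 .
\]
The vector $\mathbf{y}$ with $y_i=x_i^2$ lies in the standard simplex. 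The Motzkin--Straus theorem states that $\max_{\mathbf{y}\in\Delta}\sum_{ij\in E}y_iy_j=\tfrac12\bigl(1-\tfrac1{\omega(G)}\bigr)$, which I take as known. Applying it to this $\mathbf{y}$ bounds the last sum by $\tfrac12(1-1/\omega)$, and therefore $\lambda_1^2\le 2(1-1/\omega)m$. This settles the inequality.

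The main obstacle is the equality case. If $\lambda_1^2=2(1-1/\omega)m$, both inequalities above must be tight.

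\textbf{Tightness in Cauchy--Schwarz.} This forces $x_ix_j=c$ for every edge $ij$, for some constant $c>0$. Feeding this into the eigen-equation gives
\[
  \lambda_1 x_i=\sum_{j\sim i}x_j=\frac{c\,d(i)}{x_i},
  \qquad\text{so}\qquad x_i^2=\frac{c\,d(i)}{\lambda_1}
\]
for every non-isolated vertex. The edge condition $x_ix_j=c$ then becomes $d(i)d(j)=\lambda_1^2$ on every edge. Consequently, on each non-bipartite component all degrees are equal. On a bipartite component the degrees are constant on each side, so the component is semiregular.

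\textbf{Tightness in Motzkin--Straus.} This forces $\mathbf{y}$ to be a maximizer of the Lagrangian. I would use the standard facts about such maximizers:
\begin{enumerate}[label=(\roman*)]
  \item a maximizer can be moved, without loss, onto a clique;
  \item for any maximizer, every two non-adjacent vertices $u,v$ in the support satisfy $N(u)\cap S=N(v)\cap S$, where $S=\operatorname{supp}\mathbf{y}$, because otherwise shifting weight between them strictly increases the form.
\end{enumerate}
By (ii), non-adjacency is an equivalence relation on $S$, so $G[S]$ is complete multipartite, with at most $\omega$ parts. Since $\mathbf{x}$ is positive exactly on the non-isolated part of the component carrying $\lambda_1$, I would first argue that $G$ has only one non-trivial component. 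A second component with edges would contribute edges to $m$ but nothing to $\mathbf{x}^TA\mathbf{x}$, making the Cauchy--Schwarz step strict. Hence $S$ consists of all non-isolated vertices, and $G$ minus its isolated vertices is complete $k$-partite with $k\le\omega$. Then $k=\omega$, because the Lagrangian maximum of a $k$-partite graph is $\tfrac12(1-1/k)$.

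\textbf{Balanced parts.} The degree condition now finishes the argument. In a complete multipartite graph a vertex in a part of size $n_p$ has degree $N-n_p$, where $N$ is the number of non-isolated vertices. If $\omega\ge3$, the graph is non-bipartite, so it is regular and all parts have equal size. If $\omega=2$, the computation $\lambda_1^2=ab=m$ for $K_{a,b}$ shows that the bound is attained by every $K_{a,b}$. This matches case (a) of Conjecture~\ref{conj:BN}, so in the bipartite case the stated equality clause ("$T(n,\omega)$ with $\omega\mid n$") must be read with this caveat; isolated vertices must likewise be discarded. I would record both points explicitly when writing the equality part.

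Conversely, $T(n,r)$ with $r\mid n$ is $(n-n/r)$-regular, so
\[
  \lambda_1^2=\Bigl(1-\frac1r\Bigr)^2n^2=2\Bigl(1-\frac1r\Bigr)m,
\]
and equality holds.
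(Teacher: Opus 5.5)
The paper gives no proof of this statement; it quotes it from Nikiforov's 2002 paper and uses it as a black box. Your argument is Nikiforov's original proof: the Rayleigh quotient, then Cauchy--Schwarz over the $m$ edges, then Motzkin--Straus applied to $y_i=x_i^2$. The inequality part is complete as written.

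You are right about the equality clause, and it should be put more strongly than ``read with a caveat'': as printed it is false. $K_{1,2}$ has $\lambda_1^2=2=2(1-\tfrac12)m$ but is not $T(n,2)$ with $2\mid n$. Likewise $K_3\cup K_1$ has $\lambda_1^2=4=2(1-\tfrac13)\cdot 3$ but is not $T(4,3)$. The correct equality case, which is what your argument proves, is this: after deleting isolated vertices, $G$ is a complete bipartite graph when $\omega=2$, or a regular complete $\omega$-partite graph when $\omega\ge 3$. The paper contradicts its own clause twice: in Theorem~\ref{thm:multipartite}(a), and in the example $K_{k,k,k}\cup kK_1$ of Remark~\ref{rem:stability-gap}. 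Its actual uses of the equality case concern $K_{n_1,\ldots,n_r}$ with $r\ge 3$, which has no isolated vertices, so they survive with the corrected version.

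The one step whose justification fails is (ii). Let $\mathbf{y}$ be a maximizer of $L(\mathbf{y})=\sum_{ij\in E}y_iy_j$ on the simplex, with support $S$.
\begin{itemize}
\item The first-order conditions give $(A\mathbf{y})_u=2L(\mathbf{y})$ for every $u\in S$.
\item So moving weight $t$ from $v$ to a non-adjacent $u\in S$ changes $L$ by $t\bigl((A\mathbf{y})_u-(A\mathbf{y})_v\bigr)-t^2A_{uv}=0$, whatever the neighbourhoods of $u$ and $v$ are.
\item This neutrality is exactly what proves (i), so it cannot also give the strict increase you claim for (ii).
\end{itemize}
The repair is to move weight from $v$ onto an \emph{edge}. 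Suppose $w\in S$ is adjacent to $u$ but not to $v$, and put $\mathbf{d}=\mathbf{e}_u+\mathbf{e}_w-2\mathbf{e}_v$. Then $\mathbf{1}^\top\mathbf{d}=0$ and the linear term $(A\mathbf{y})^\top\mathbf{d}$ vanishes. The quadratic term is $\sum_{ij\in E}d_id_j=d_ud_w=1$. Hence $L(\mathbf{y}+s\mathbf{d})=L(\mathbf{y})+s^2$ for $0<s\le y_v/2$, contradicting maximality.

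An equivalent route: second-order optimality makes $A(G[S])$ negative semidefinite on $\mathbf{1}^\perp$. So $G[S]$ has a single positive eigenvalue, and since it has no isolated vertices, Smith's theorem makes it complete multipartite.

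Once (ii) is repaired, your separate one-component argument is unnecessary. As phrased, it also assumes $\mathbf{x}$ vanishes off one component; you may choose $\mathbf{x}$ that way, but an arbitrary Perron vector need not. Tightness in Cauchy--Schwarz already puts every non-isolated vertex in $S$, and a complete multipartite $G[S]$ is connected. The rest of your equality analysis is correct: $d(i)d(j)=\lambda_1^2$ on edges, $k=\omega$, and regularity forces equal parts when $\omega\ge 3$.
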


\begin{theorem}[Weyl's inequality]
  \label{thm:weyl}
  Let $A$ and $E$ be real symmetric $n \times n$ matrices.  Then
  $|\lambda_k(A + E) - \lambda_k(A)| \leq \|E\|_2$ for every $k$,
  where $\|E\|_2$ is the spectral norm of $E$.
\end{theorem}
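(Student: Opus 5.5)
The statement is Weyl's inequality, a classical fact. I will derive it from the Courant--Fischer min-max characterization. Write $\mu_1 \geq \cdots \geq \mu_n$ for the eigenvalues of $A$, $\nu_1 \geq \cdots \geq \nu_n$ for those of $A+E$, and $\varepsilon = \|E\|_2$. Since $E$ is symmetric, $\varepsilon = \max_i |\lambda_i(E)|$. Hence
\[
  -\varepsilon\,\|x\|^2 \;\leq\; x^{\top}Ex \;\leq\; \varepsilon\,\|x\|^2
  \qquad\text{for all } x \in \R^n.
\]

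The key tool is Courant--Fischer:
\[
  \lambda_k(M) \;=\; \max_{\dim U = k}\; \min_{x \in U,\ x \neq 0}
  \frac{x^{\top}Mx}{x^{\top}x}.
\]
Let $U$ be the $k$-dimensional subspace attaining the maximum for $A$, namely the span of eigenvectors for $\mu_1,\ldots,\mu_k$. For every nonzero $x \in U$,
\[
  \frac{x^{\top}(A+E)x}{x^{\top}x} \;\geq\; \mu_k - \varepsilon .
\]
The same $U$ is admissible in the max--min formula for $A+E$, so $\nu_k \geq \mu_k - \varepsilon$.

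For the reverse inequality, write $A = (A+E) + (-E)$ and note that $\|-E\|_2 = \varepsilon$. Applying the same argument gives $\mu_k \geq \nu_k - \varepsilon$. Combining the two inequalities yields $|\nu_k - \mu_k| \leq \varepsilon$ for every $k$.

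The only step that needs care is justifying Courant--Fischer itself, if one does not simply cite it. Take an orthonormal eigenbasis $v_1,\ldots,v_n$ of $M$. Any $k$-dimensional subspace $U$ meets $\operatorname{span}(v_k,\ldots,v_n)$ nontrivially, by counting dimensions: $k + (n-k+1) > n$. On that intersection the Rayleigh quotient is at most $\lambda_k(M)$, so no subspace does better than $\lambda_k(M)$. The subspace $\operatorname{span}(v_1,\ldots,v_k)$ achieves $\lambda_k(M)$, so the maximum equals $\lambda_k(M)$. With that in hand, the rest of the argument is immediate.
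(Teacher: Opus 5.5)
Your argument is correct and complete. Taking the top-$k$ eigenspace of $A$ as a test subspace in the max--min formula gives $\lambda_k(A+E) \geq \lambda_k(A) - \|E\|_2$. The swap $A = (A+E) + (-E)$ then gives the reverse bound, and your dimension-counting justification of Courant--Fischer is sound.

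The paper does not prove this statement at all. It lists Weyl's inequality among the classical preliminaries and invokes it only informally, in the remarks on the stability conjecture. Your proof supplies the standard textbook derivation that the paper takes for granted.
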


\begin{theorem}[Hoffman bound~\cite{Hoffman1970}]
  \label{thm:hoffman}
  For any graph $G$ on $n$ vertices,
  \[
    \alpha(G)
    \;\leq\;
    \frac{-n\,\lambda_n(G)}{\lambda_1(G) - \lambda_n(G)}.
  \]
\end{theorem}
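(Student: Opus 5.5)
The statement as printed cannot be proved in full generality, because it is false for irregular graphs; my plan is to prove it in the setting where it is true and to flag the gap. The star $K_{1,k}$ is a counterexample. It has $n=k+1$, $\lambda_1=\sqrt{k}$ and $\lambda_n=-\sqrt{k}$, so the right-hand side of Theorem~\ref{thm:hoffman} equals $(k+1)/2$. But $\alpha(K_{1,k})=k$, which is larger than $(k+1)/2$ for every $k\ge 2$. Hoffman's ratio bound is really a statement about \emph{$d$-regular} graphs, where $\lambda_1=d$. So I would first restrict the hypothesis to regular $G$ (or use a corrected irregular version, see below), and only then give a proof.

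For $d$-regular $G$ I would use the quadratic-form argument.
\begin{enumerate}
\item Let $S$ be a maximum independent set with $s=|S|$. Put $x=\mathbf{1}_S-\frac{s}{n}\mathbf{1}$.
\item Since $A\mathbf{1}=d\mathbf{1}$ and $x\perp\mathbf{1}$, the vector $x$ lies in the span of the eigenvectors for $\lambda_2,\dots,\lambda_n$. Hence $x^{\top}Ax\ge\lambda_n\|x\|^2$.
\item Compute both sides directly. Independence gives $\mathbf{1}_S^{\top}A\mathbf{1}_S=0$, and regularity gives $\mathbf{1}_S^{\top}A\mathbf{1}=sd$ and $\mathbf{1}^{\top}A\mathbf{1}=nd$. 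Therefore
\[
x^{\top}Ax=-\frac{2s^2d}{n}+\frac{s^2d}{n}=-\frac{s^2d}{n},
\qquad
\|x\|^2=s-\frac{s^2}{n}.
\]
\item The inequality from step 2 becomes $-\frac{s^2 d}{n}\ge\lambda_n\bigl(s-\frac{s^2}{n}\bigr)$. Dividing by $s>0$ and rearranging, using $d-\lambda_n>0$, gives $s\le\frac{-n\lambda_n}{d-\lambda_n}$. With $d=\lambda_1$ this is exactly the claimed bound.
\end{enumerate}
An equivalent route is Cauchy interlacing applied to the quotient matrix of the partition $\{S,V\setminus S\}$. That matrix is $\begin{pmatrix}0&d\\ \frac{sd}{n-s}&d-\frac{sd}{n-s}\end{pmatrix}$, and its smaller eigenvalue $-\frac{sd}{n-s}$ must be at least $\lambda_n$.

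For general graphs, the main obstacle is that without regularity the vector $\mathbf{1}$ is no longer an eigenvector. The cross term $\mathbf{1}_S^{\top}A\mathbf{1}$ then equals the degree sum over $S$, which cannot be controlled by $\lambda_1$ alone. This is exactly what breaks in the star. To rescue an irregular statement, I would replace the quotient-matrix step by the interlacing argument with edge counts. Write $e(S,V\setminus S)\ge s\delta$ for the number of edges leaving $S$. The two-part quotient matrix then has eigenvalues interlacing $\lambda_1$ and $\lambda_n$, and its determinant and trace give $\lambda_1\lambda_n\le -\frac{e(S,\bar S)^2}{s(n-s)}$. This leads to the Haemers/Wilf-type bound
\[
\alpha(G)\le\frac{-n\lambda_1\lambda_n}{\delta^2-\lambda_1\lambda_n}.
\]
I would state that version, or keep the regularity assumption, wherever the paper later invokes Theorem~\ref{thm:hoffman}, for example in the independence-number discussion around Conjecture~\ref{conj:independence}.
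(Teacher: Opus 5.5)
The paper gives no proof of this statement; it only quotes it with a citation, so there is no argument to compare yours against. Your objection is correct: as printed, the inequality is false for irregular graphs. Your computation for $K_{1,k}$ is right: $n=k+1$, $\lambda_1=-\lambda_n=\sqrt{k}$, so the right-hand side is $(k+1)/2<k=\alpha$ for $k\ge 2$. (The cited 1970 paper proves, for all graphs, the chromatic bound $\chi\ge 1-\lambda_1/\lambda_n$; the ratio bound for $\alpha$ needs regularity.)

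Your proof of the $d$-regular case via the quadratic form of $\mathbf{1}_S-\frac{s}{n}\mathbf{1}$ is the standard one. The computations check out: $x^\top Ax=-s^2d/n$ and $\|x\|^2=s(n-s)/n$. The quotient-matrix variant is also correct. So is your derivation of Haemers' bound $\alpha\le -n\lambda_1\lambda_n/(\delta^2-\lambda_1\lambda_n)$, which reduces to the printed bound when $\delta=\lambda_1=d$ and is attained by the star.

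Three small points:
\begin{itemize}
\item The inequality $x^\top Ax\ge\lambda_n\|x\|^2$ holds for every $x$, so orthogonality to $\mathbf{1}$ is not needed.
\item You need at least one edge for $d-\lambda_n>0$; for the edgeless graph the printed right-hand side is $0/0$.
\item In the irregular argument only the determinant is used, not the trace. You should say explicitly that $\det B=-e(S,\bar S)^2/(s(n-s))<0$ forces $\mu_1>0>\mu_2$ when $e(S,\bar S)>0$. Only then may the interlacing inequalities $\lambda_1\ge\mu_1$ and $-\lambda_n\ge-\mu_2$ be multiplied.
\end{itemize}

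Your warning about the later use is well founded. In fact the downstream claim is not merely unsupported but false. Proposition~\ref{prop:hoffman} fails for the Higman--Sims graph together with $17$ isolated vertices. The Higman--Sims graph is triangle-free with eigenvalues $22$, $2$, $-8$, and $\alpha\ge 22$ because any neighbourhood is independent. The union therefore has $n=117$ and $\alpha\ge 39=n/3$, yet $|\lambda_n|=8<11=\lambda_1/2$. Any regular $K_4$-free graph with $|\lambda_n|<\lambda_1/2$, padded with isolated vertices, fails in the same way.

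Consequently Proposition~\ref{prop:obstruction}, as stated, rests on a false premise. With your corrected versions, Proposition~\ref{prop:hoffman} survives only for regular graphs. For general graphs, Haemers' bound combined with $\alpha\ge n/3$ gives only $|\lambda_n|\ge \delta^2/(2\lambda_1)$.
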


\begin{theorem}[Motzkin--Straus~\cite{MotzkinStraus1965}]
  \label{thm:MS}
  For any graph $G$ with clique number $\omega$,
  \[
    \max_{\mathbf{x} \in \Delta_n}
    \mathbf{x}^\top A(G)\, \mathbf{x}
    \;=\;
    1 - \frac{1}{\omega},
  \]
  where $\Delta_n = \{\mathbf{x} \geq 0 : \mathbf{1}^\top \mathbf{x} = 1\}$
  is the probability simplex.
  A maximizer is the uniform distribution over the vertices of any maximum
  clique.
\end{theorem}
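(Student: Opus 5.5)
The plan is to prove the two inequalities separately. Write $f(\mathbf{x}) = \mathbf{x}^\top A(G)\mathbf{x} = 2\sum_{uv\in E(G)} x_u x_v$. For the lower bound, take a maximum clique $Q$ with $|Q| = \omega$ and let $\mathbf{x}$ be uniform on $Q$. Every pair in $Q$ is an edge, so
\[
  f(\mathbf{x}) \;=\; 2\binom{\omega}{2}\frac{1}{\omega^2} \;=\; 1 - \frac{1}{\omega}.
\]
This also shows that the uniform distribution on a maximum clique is a maximizer, once the upper bound is established.

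For the upper bound, I would argue by compactness and a support-reduction trick. The function $f$ is continuous on the compact set $\Delta_n$, so a maximizer exists. Among all maximizers, choose $\mathbf{x}$ whose support $S = \{v : x_v > 0\}$ has minimum size. The key claim is that $S$ is a clique.

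Suppose instead that $i, j \in S$ are non-adjacent, and consider $\mathbf{x}(t) = \mathbf{x} + t(e_i - e_j)$. Since $A_{ii} = A_{jj} = A_{ij} = 0$, the quadratic term $t^2 (e_i - e_j)^\top A (e_i - e_j)$ vanishes. Hence
\[
  f(\mathbf{x}(t)) = f(\mathbf{x}) + 2t\bigl((A\mathbf{x})_i - (A\mathbf{x})_j\bigr),
\]
which is linear in $t$. Because $\mathbf{x}(t) \in \Delta_n$ for $t$ in an open interval around $0$ and $\mathbf{x}$ is a maximizer, the slope must be zero. Therefore $f$ is constant along the segment. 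Moving $t$ until $x_i$ or $x_j$ reaches $0$ gives another maximizer with strictly smaller support, which contradicts minimality. So $S$ is a clique, and $k := |S| \le \omega$.

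On a clique, $f(\mathbf{x}) = \bigl(\sum_{v\in S} x_v\bigr)^2 - \sum_{v \in S} x_v^2 = 1 - \sum_{v\in S} x_v^2$. By Cauchy--Schwarz, $\sum_{v\in S} x_v^2 \ge 1/k$, so $f(\mathbf{x}) \le 1 - 1/k \le 1 - 1/\omega$. This completes the proof.

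The only delicate step is the support-reduction argument. Two points need care there. First, the vanishing of the $t^2$ coefficient relies on the zero diagonal of $A$ together with non-adjacency of $i$ and $j$. Second, the zero-derivative conclusion needs $i, j$ to lie in the interior of the support, so that $t$ can range over both signs. Everything else is routine.
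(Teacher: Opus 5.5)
Your proof is correct and complete. The paper gives no proof of this statement to compare against: it quotes the result from Motzkin and Straus as a preliminary, and in fact never invokes it later.

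Your argument is the standard self-contained weight-shifting proof:
\begin{itemize}
\item Compactness gives a maximizer, and choosing one of minimal support plays the role of an induction on the number of vertices.
\item For non-adjacent $i \neq j$ you have $(e_i-e_j)^\top A(e_i-e_j)=A_{ii}-2A_{ij}+A_{jj}=0$. So $f$ is affine along $\mathbf{x}+t(e_i-e_j)$ for $t\in[-x_i,x_j]$. Because $x_i,x_j>0$, this interval contains $0$ in its interior.
\item Maximality then forces zero slope. Moving to $t=x_j$ produces a maximizer with support $S\setminus\{j\}$, which contradicts minimality.
\item The final step $f(\mathbf{x})=1-\sum_{v\in S}x_v^2\le 1-1/|S|\le 1-1/\omega$ is also right.
\end{itemize}

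Compared with simply citing the theorem, your route costs a paragraph but yields slightly more. Trace the equality case: a minimal-support maximizer has clique support of size $k\le\omega$ with $1-1/\omega\le 1-1/k$, so $k=\omega$. Equality in Cauchy--Schwarz then forces uniform weights. Hence the minimal-support maximizers are exactly the uniform distributions on maximum cliques. This upgrades the statement's final sentence, which only asserts that one such maximizer exists.
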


\begin{theorem}[Interlacing theorem~\cite{HaemersInterlacing}]
  \label{thm:interlace}
  If $H$ is an induced subgraph of $G$ on $k$ vertices with eigenvalues
  $\mu_1 \geq \cdots \geq \mu_k$, then
  $\lambda_{n-k+i}(G) \leq \mu_i \leq \lambda_i(G)$ for all $i = 1, \ldots, k$.
\end{theorem}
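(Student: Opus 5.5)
The statement immediately above is the interlacing theorem (Theorem~\ref{thm:interlace}), a classical result, so I will prove it by the Courant--Fischer min-max characterization. Order $V(G)$ so that $H$ occupies the first $k$ vertices. Then $A(H)$ is the leading $k\times k$ principal submatrix of $A=A(G)$, i.e.\ $A(H)=P^\top A P$, where $P$ is the $n\times k$ matrix whose columns are the first $k$ standard basis vectors. The map $\mathbf{y}\mapsto P\mathbf{y}$ is an isometric embedding of $\R^k$ into $\R^n$, and it preserves Rayleigh quotients:
\[
\frac{\mathbf{y}^\top A(H)\mathbf{y}}{\mathbf{y}^\top\mathbf{y}}=\frac{(P\mathbf{y})^\top A (P\mathbf{y})}{(P\mathbf{y})^\top (P\mathbf{y})}.
\]

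\emph{Upper bound $\mu_i\le\lambda_i(G)$.} Courant--Fischer gives
\[
\mu_i=\max_{\dim U=i,\ U\subseteq\R^k}\ \min_{0\neq \mathbf{y}\in U}\frac{\mathbf{y}^\top A(H)\mathbf{y}}{\mathbf{y}^\top \mathbf{y}}.
\]
The image $PU$ is an $i$-dimensional subspace of $\R^n$, and the Rayleigh quotients agree on it. Hence $\mu_i$ is a max over a subfamily of the $i$-dimensional subspaces appearing in the max-min formula for $\lambda_i(G)$, so $\mu_i\le\lambda_i(G)$.

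\emph{Lower bound $\lambda_{n-k+i}(G)\le\mu_i$.} I use the dual min-max form
\[
\mu_i=\min_{\dim W=k-i+1}\ \max_{0\neq \mathbf{y}\in W}\frac{\mathbf{y}^\top A(H)\mathbf{y}}{\mathbf{y}^\top \mathbf{y}},
\]
and the analogous formula for $\lambda_j(G)$ with subspaces of dimension $n-j+1$. Setting $j=n-k+i$ gives $n-j+1=k-i+1$, so the embedded subspaces $PW$ have exactly the dimension required for $\lambda_{n-k+i}(G)$. Taking the min over the smaller family gives a value at least as large, so $\lambda_{n-k+i}(G)\le\mu_i$. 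Alternatively, this bound follows by applying the upper bound to $-A$ and $-A(H)$ and reindexing.

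The only delicate point is this index bookkeeping in the second inequality, namely matching the dimension $k-i+1$ to $n-j+1$. Everything else is the standard restriction argument, which works because an induced subgraph corresponds exactly to a principal submatrix of $A$.
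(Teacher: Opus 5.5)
Your Courant--Fischer argument is correct, including the dimension matching $n-j+1=k-i+1$ for $j=n-k+i$ and the alternative derivation of the lower bound from $-A$ and $-A(H)$. The paper gives no proof of its own and simply cites Haemers, whose proof is the same Rayleigh-quotient restriction argument. Haemers states it for any $S^\top A S$ with $S^\top S = I$, of which your $P^\top A P$ is the special case, and does the dimension count directly on eigenvectors rather than quoting Courant--Fischer, so your approach is essentially the standard one.
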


\begin{lemma}[Equitable partition]
  \label{lem:equitable}
  Let $G$ have an equitable partition $\Pi = (V_1,\ldots,V_r)$ with quotient
  matrix $B \in \R^{r\times r}$.
  Then every eigenvalue of $B$ is an eigenvalue of $A(G)$.
  In particular $\lambda_1(G) = \lambda_1(B)$.
\end{lemma}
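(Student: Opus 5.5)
We would prove the lemma by the standard characteristic-matrix argument. Let $S \in \R^{n\times r}$ be the characteristic matrix of $\Pi$: $S_{v,i}=1$ if $v\in V_i$ and $0$ otherwise. Assume, as is implicit, that every part is nonempty.

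\textbf{Equitability as a matrix identity.} The partition is equitable: each $v\in V_i$ has exactly $b_{ij}$ neighbours in $V_j$. This says precisely that $AS = SB$, where $A=A(G)$, and checking it entrywise is the first step.

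\textbf{Eigenvalues of $B$ lift to $A$.} If $Bx=\theta x$ with $x\neq 0$, then
\[
A(Sx)=SBx=\theta\, Sx .
\]
Moreover $Sx\neq 0$, because the columns of $S$ have disjoint nonempty supports and so $S$ has full column rank. Hence every eigenvalue of $B$ is an eigenvalue of $A$. Multiplicities are also preserved, since $x\mapsto Sx$ is injective.

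\textbf{Symmetrising the quotient.} For the statement $\lambda_1(G)=\lambda_1(B)$, we pass to a symmetric setting. Let $D=S^\top S=\mathrm{diag}(|V_1|,\dots,|V_r|)$ and $Q=SD^{-1/2}$, so that $Q$ has orthonormal columns. From $AS=SB$ we get $S^\top A S = DB$, hence
\[
Q^\top A Q = D^{1/2} B D^{-1/2}.
\]
Thus $B$ is similar to the symmetric matrix $Q^\top AQ$, and its eigenvalues are real. The column space $U=\mathrm{col}(Q)$ is $A$-invariant. Since $A$ is symmetric, $U^\perp$ is also $A$-invariant, and the orthogonal projection $P=QQ^\top$ onto $U$ commutes with $A$. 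The spectrum of $A$ is therefore the union (with multiplicity) of the spectrum of $A|_U$, which is that of $B$, and the spectrum of $A|_{U^\perp}$. In particular $\lambda_1(B)\le \lambda_1(A)$.

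\textbf{The reverse inequality (main step).} This is the only step needing an idea beyond linear algebra. By Perron--Frobenius for the nonnegative symmetric matrix $A$, connected or not, there is a nonzero eigenvector $v\ge 0$ with $Av=\lambda_1(A)v$. Because $P$ commutes with $A$, the vector $Pv$ is again in the $\lambda_1(A)$-eigenspace. Now
\[
Q^\top v = D^{-1/2}S^\top v ,
\]
whose $i$-th entry is a positive multiple of $\sum_{u\in V_i} v_u$. These entries are nonnegative and not all zero, since $v\ge0$, $v\ne0$ and the parts cover $V(G)$. Hence $Pv=Q(Q^\top v)\neq 0$. So $\lambda_1(A)$ is an eigenvalue of $A|_U$, and therefore of $B$, giving $\lambda_1(B)\ge\lambda_1(A)$. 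Combined with the previous step, $\lambda_1(G)=\lambda_1(B)$.

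The main care point is this last step, because $G$ need not be connected. We avoid any use of positivity or simplicity of the Perron vector and rely only on $v\ge 0$, $v\ne 0$, together with the fact that the parts partition $V(G)$.
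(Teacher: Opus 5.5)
The paper does not prove this lemma. It is quoted among the preliminaries as a standard fact, and it is never invoked later: the spectrum of $K_{n_1,\ldots,n_r}$ is computed directly in Lemmas~\ref{lem:within} and~\ref{lem:secular}. So there is no route to compare against, and your argument has to stand on its own. It does: it is correct, and it is the standard characteristic-matrix proof.

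The identity $AS=SB$ and the injectivity of $x\mapsto Sx$ are right. So is the similarity $Q^\top AQ=D^{1/2}BD^{-1/2}$, which is what makes $\lambda_1(B)$ meaningful, since $B$ need not be symmetric.

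You are also right that the ``in particular'' clause does not follow formally from the first sentence. That sentence only gives $\lambda_1(B)\le\lambda_1(G)$. The reverse inequality needs a nonnegative Perron vector, and your version correctly avoids assuming $G$ is connected.

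There is a slightly shorter route for that step. Transposing $AS=SB$ gives $S^\top A=B^\top S^\top$. Take $Av=\lambda_1 v$ with $v\ge 0$, $v\neq 0$. Then $B^\top(S^\top v)=\lambda_1\,(S^\top v)$, and $S^\top v\ge 0$ is nonzero because its entries $\sum_{u\in V_i}v_u$ sum to $\sum_u v_u>0$. Hence $\lambda_1(G)$ is an eigenvalue of $B^\top$, and therefore of $B$. This bypasses the projection $P$ and the invariance of $U^\perp$ entirely.
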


%% ============================================================
\section{Complete Multipartite Graphs}
\label{sec:multipartite}
%% ============================================================

We prove Theorem~\ref{thm:multipartite} by first determining the full
spectrum of $G = K_{n_1,\ldots,n_r}$.
The spectrum splits into a large zero eigenspace and $r$ further
eigenvalues determined by a secular equation.

\subsection{The spectrum of $K_{n_1,\ldots,n_r}$}

\begin{lemma}[Zero eigenspace]
  \label{lem:within}
  Let $G = K_{n_1,\ldots,n_r}$ with parts $V_1,\ldots,V_r$ of sizes
  $n_1,\ldots,n_r$.  The eigenvalue $0$ of $A(G)$ has multiplicity
  exactly $n - r$.
\end{lemma}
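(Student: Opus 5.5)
The plan is to compute the rank of $A = A(G)$ directly. Once we know that $\operatorname{rank}(A) = r$, it follows that $\dim\ker A = n - r$. Since $A$ is symmetric, its geometric and algebraic multiplicities coincide, so this gives exactly the multiplicity of the eigenvalue $0$.

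\emph{Step 1 (at least $n-r$).} We build an explicit $(n-r)$-dimensional subspace of the kernel. For each part $V_i$, let $W_i$ be the set of vectors supported on $V_i$ whose entries sum to zero; its dimension is $n_i - 1$. Take $\mathbf{x} \in W_i$ and a vertex $u$.
\begin{itemize}[label=--]
  \item If $u \notin V_i$, then $u$ is adjacent to every vertex of $V_i$, so $(A\mathbf{x})_u = \sum_{v \in V_i} x_v = 0$.
  \item If $u \in V_i$, then $u$ has no neighbours in $V_i$, so $(A\mathbf{x})_u = 0$.
\end{itemize}
Hence $W_i \subseteq \ker A$. The spaces $W_i$ have disjoint supports, so their sum is direct and has dimension $\sum_i (n_i - 1) = n - r$.

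\emph{Step 2 (at most $n-r$).} We show that $\operatorname{rank} A = r$. Rows indexed by vertices in the same part are identical, since such vertices have the same neighbourhood $V \setminus V_i$. So the row space is spanned by the $r$ distinct rows $\mathbf{1} - \mathbf{1}_{V_i}$, and $\operatorname{rank} A \le r$. For the reverse inequality, we restrict to one representative vertex from each part. The corresponding principal $r\times r$ submatrix is $J_r - I_r$, the adjacency matrix of $K_r$. Its eigenvalues are $r-1$ and $-1$, so it is nonsingular, and therefore $\operatorname{rank} A \ge r$.

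Combining the two steps, $\dim \ker A = n - r$, which proves the lemma. Step 2 also shows that the span of the $W_i$ is the entire kernel.

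The step that needs the most care is the upper bound on the multiplicity. The argument above handles it cleanly through the nonsingular principal submatrix $J_r - I_r$. As an alternative, one could use the equitable partition (Lemma~\ref{lem:equitable}): the quotient matrix is $B = (J - I)D$ with $D = \operatorname{diag}(n_1,\ldots,n_r)$, which is invertible because both factors are. One would then argue that the orthogonal complement of $\bigoplus_i W_i$ is the $r$-dimensional space of vectors constant on parts, and that $A$ acts on it as $B$, so no further zero eigenvalues arise there. I would present the rank argument as the main proof.
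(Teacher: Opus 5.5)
Your proof is correct. Step 1 is essentially the paper's lower bound: the paper writes down the explicit basis vectors $\mathbf{e}_{u_k}-\mathbf{e}_{u_{n_i}}$ of your $W_i$.

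The two routes differ in the upper bound. The paper does not prove it inside the lemma. It only says that the complementary $r$-dimensional space ``is accounted for by the secular equation below.'' To make that an argument, one has to note three things: the complement of $\bigoplus_i W_i$ is the space of part-constant vectors; $A$ acts on that space as the quotient matrix; and $\lambda=0$ is not a root of $\sum_i n_i/(\lambda+n_i)=1$, since the left side equals $r\neq 1$ at $\lambda=0$. That is precisely your alternative route via $B=(J-I)D$.

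Your main argument bounds $\operatorname{rank}A\ge r$ using the nonsingular principal submatrix $J_r-I_r$ on one representative per part. This is self-contained and shorter, and it avoids the forward reference; the secular-equation lemma itself starts from the zero eigenvectors built in this lemma. Your Step 2 alone already yields $\dim\ker A=n-r$, so Step 1 only serves to describe the kernel explicitly. The paper's route, in exchange, identifies the remaining $r$ eigenvalues as secular roots, which it needs anyway to get $\lambda_2=0$.

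One small point: the nonsingularity of $J_r-I_r$ (its determinant is $(-1)^{r-1}(r-1)$) is exactly where $r\ge 2$ enters. For $r=1$ the graph is edgeless, $A=0$, and the lemma fails. You should state this hypothesis explicitly; the paper leaves it implicit as well.
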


\begin{proof}
For each part $V_i = \{u_1,\ldots,u_{n_i}\}$ and each index
$k \in \{1,\ldots,n_i - 1\}$, define the vector $\mathbf{f}^{(i,k)}$
in $\R^n$ by
\[
  f^{(i,k)}_w
  \;=\;
  \begin{cases}
    +1 & \text{if } w = u_k, \\
    -1 & \text{if } w = u_{n_i}, \\
    \phantom{+}0 & \text{otherwise.}
  \end{cases}
\]
We claim $A(G)\mathbf{f}^{(i,k)} = \mathbf{0}$.
Take any vertex $w \in V(G)$.
If $w \in V_j$ for some $j \neq i$, then every neighbour of $w$ in
$K_{n_1,\ldots,n_r}$ lies in $V(G) \setminus V_j$, hence in particular
all neighbours of $w$ with nonzero $\mathbf{f}^{(i,k)}$-entry lie in
$V_i$.  Since $\mathbf{f}^{(i,k)}$ is supported on exactly $u_k$ and
$u_{n_i}$, both in $V_i \subseteq V(G) \setminus V_j$, we get
$(A\mathbf{f}^{(i,k)})_w = f^{(i,k)}_{u_k} + f^{(i,k)}_{u_{n_i}} = 1 + (-1) = 0$.
If $w \in V_i$, then every neighbour of $w$ lies in $V(G) \setminus V_i$,
on which $\mathbf{f}^{(i,k)}$ vanishes, so $(A\mathbf{f}^{(i,k)})_w = 0$.
Thus $\mathbf{f}^{(i,k)}$ is a $0$-eigenvector.

For fixed $i$, the vectors $\mathbf{f}^{(i,1)},\ldots,\mathbf{f}^{(i,n_i-1)}$
span the $(n_i-1)$-dimensional subspace of vectors supported on $V_i$
that sum to zero on $V_i$.
Vectors supported on different parts have disjoint supports, so the
subspaces for distinct $i$ are mutually orthogonal.
The total dimension of the zero eigenspace is at least
$\sum_{i=1}^r (n_i-1) = n - r$; the complementary $r$-dimensional
space is accounted for by the secular equation below, so the
multiplicity is exactly $n - r$.
\end{proof}

\begin{lemma}[Secular equation and the sign of $\lambda_2$]
  \label{lem:secular}
  Let $G = K_{n_1,\ldots,n_r}$ with $r \geq 2$ and parts of sizes
  $n_1 \geq \cdots \geq n_r \geq 1$.
  The eigenvalues of $A(G)$ outside the zero eigenspace are the
  real roots of
  \begin{equation}
    \label{eq:secular}
    \sum_{i=1}^r \frac{n_i}{\lambda + n_i} \;=\; 1.
  \end{equation}
  There is exactly one positive root $\alpha_1$, and all remaining
  roots are at most $-n_r < 0$.  In particular, if $n > r$ then
  $\lambda_2(G) = 0$.
\end{lemma}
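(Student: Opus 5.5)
We first derive the secular equation from the structure of $A(G)$. Write $A = J - \bigoplus_i J_{n_i}$, where $J$ is the all-ones $n\times n$ matrix and $J_{n_i}$ is the all-ones block on $V_i$. The space $W$ of vectors that are constant on each part has dimension $r$. It is $A$-invariant and orthogonal to the zero eigenspace of Lemma~\ref{lem:within}. So the remaining $r$ eigenvalues are those of $A$ restricted to $W$.

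Take $x$ with value $x_i$ on $V_i$ and suppose $Ax = \lambda x$. Setting $S = \sum_j n_j x_j$, the eigen-equation reads
\[
  S - n_i x_i = \lambda x_i, \qquad i = 1,\ldots,r .
\]
Hence $x_i = S/(\lambda + n_i)$ whenever $\lambda \neq -n_i$. Also $S \neq 0$: if $S = 0$ then every $x_i = 0$ unless $\lambda = -n_i$ for some $i$. Multiplying by $n_i$ and summing over $i$ gives $S = S\sum_i n_i/(\lambda+n_i)$, which is \eqref{eq:secular}. Conversely, each root of \eqref{eq:secular} yields an eigenvector by this formula.

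The degenerate values $\lambda = -n_i$ arise only when a part size repeats. If $n_i = n_j$ for some $i \neq j$, put $x_i = 1/n_i$ and $x_j = -1/n_j$, all other entries $0$. Then $S = 0$ and we get an eigenvalue $-n_i$ with multiplicity (size of the tie class) $-1$. These are still $\le -n_r$, so they do not affect the sign claim. To keep the statement clean, the count will be phrased via $W$: the eigenvalues of $A|_W$ are the roots of \eqref{eq:secular} together with these repeated $-n_i$'s. Equivalently, clearing denominators gives the polynomial $\prod_i(\lambda+n_i)\bigl(1-\sum_i n_i/(\lambda+n_i)\bigr)$, whose roots, counted with multiplicity, are exactly the $r$ eigenvalues of $A|_W$.

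Next we locate the roots using $f(\lambda) = \sum_i n_i/(\lambda+n_i)$. On $(-n_r,\infty)$, $f$ is strictly decreasing from $+\infty$ to $0$. Moreover $f(0) = r > 1$. So $f = 1$ has exactly one root there, and it is positive; this is $\alpha_1$.

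Now list the distinct part sizes $d_1 > \cdots > d_s$, with multiplicities $c_1,\ldots,c_s$. On each interval $(-d_{t},-d_{t+1})$, $f$ decreases from $+\infty$ to $-\infty$, so there is exactly one root in each of these $s-1$ intervals. Adding the $\sum_t (c_t - 1) = r - s$ degenerate eigenvalues $-d_t$ gives $1 + (s-1) + (r-s) = r$ eigenvalues in total. This exhausts $W$, and every one except $\alpha_1$ is at most $-d_s = -n_r < 0$.

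Finally, when $n > r$, Lemma~\ref{lem:within} gives the eigenvalue $0$ with multiplicity $n-r \ge 1$. Only one eigenvalue is positive, so $\lambda_1 = \alpha_1$ and $\lambda_2 = 0$.

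The main obstacle is the bookkeeping around repeated part sizes. There the secular equation alone does not produce all $r$ eigenvalues, and one must separately verify the eigenvalues $-n_i$ and count multiplicities. The plan is to handle this cleanly through the $A$-invariant subspace $W$ together with the cleared-denominator polynomial.
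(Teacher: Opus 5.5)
Your proposal is correct and follows the paper's route: restrict to the $r$-dimensional $A$-invariant space of part-constant vectors, derive the secular equation from $x_i(\lambda+n_i)=S$, and locate the roots by strict monotonicity of $f$ between consecutive poles. Your handling of repeated part sizes is more accurate than the paper's. The paper wrongly places the vectors $\mathbf{1}_{V_i}-\mathbf{1}_{V_j}$ (with $n_i=n_j$) in the zero eigenspace, whereas you correctly identify them as eigenvectors for $-n_i$, which means the lemma's literal wording needs that amendment. You also close the count $1+(s-1)+(r-s)=r$, which pins down the multiplicity $n-r$ of the eigenvalue $0$.
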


\begin{proof}
\textbf{Reduction to part-constant eigenvectors.}
Any permutation of vertices within a fixed part $V_i$ is a graph
automorphism of $K_{n_1,\ldots,n_r}$, so two vertices in the same
part have identical rows in $A(G)$.
Let $\mathbf{v}$ be an eigenvector with eigenvalue $\lambda$ that is
orthogonal to the entire zero eigenspace identified in
Lemma~\ref{lem:within}.
For each $i$, the vector $\mathbf{v}$ must be constant on $V_i$:
if it were not, the projection of $\mathbf{v}$ onto the zero eigenspace
for part $V_i$ (namely the component of $\mathbf{v}$ that is supported
on $V_i$ and sums to zero there) would be nonzero, contradicting
orthogonality.
Hence we may write $v_u = c_i$ for all $u \in V_i$.

\textbf{Deriving the secular equation.}
For $u \in V_i$, the eigenvalue equation $(A\mathbf{v})_u = \lambda c_i$
reads $\sum_{j \neq i} n_j c_j = \lambda c_i$.
Setting $s := \sum_{j=1}^r n_j c_j$, this becomes $s - n_i c_i = \lambda c_i$,
so $c_i(\lambda + n_i) = s$.
For a nontrivial eigenvector at least one $c_i$ is nonzero; if $\lambda = -n_i$
for every $i$ with $c_i \neq 0$, then $s = c_i(\lambda + n_i) = 0$ for
all $i$, forcing $s = 0$ and therefore all $c_j = s/(\lambda+n_j) = 0$
for $j$ with $\lambda \neq -n_j$; the only possibility is that $\lambda = -n_i$
for all parts $i$ with $n_i$ equal to the same value, and the eigenvectors
are differences of the constant vectors on those parts --- these are already
accounted for in the zero eigenspace when all $n_i = n_j$ (since then
$\lambda + n_j = 0$, and such a difference vector has $s = 0$).
We therefore focus on $\lambda \notin \{-n_1,\ldots,-n_r\}$ and $s \neq 0$,
giving $c_i = s/(\lambda + n_i)$.
Substituting into the definition of $s$:
\[
  s
  \;=\; \sum_{i=1}^r n_i c_i
  \;=\; s \sum_{i=1}^r \frac{n_i}{\lambda + n_i},
\]
and dividing by $s \neq 0$ yields equation~\eqref{eq:secular}.

\textbf{Root analysis.}
Define $f(\lambda) = \sum_{i=1}^r n_i/(\lambda + n_i)$.
The function $f$ is continuous and strictly decreasing on each interval
between consecutive poles, since
$f'(\lambda) = -\sum_{i=1}^r n_i/(\lambda + n_i)^2 < 0$ wherever $f$ is
defined.
The poles of $f$ occur at the values $\{-n_i : 1 \leq i \leq r\}$;
let the distinct values in this set be
$-p_1 < -p_2 < \cdots < -p_s \leq -1 < 0$,
where $p_1 > p_2 > \cdots > p_s \geq 1$ are the distinct part sizes
and $s \leq r$.

\textit{One positive root.}
On $(0,+\infty)$, every term $n_i/(\lambda + n_i)$ is positive and
decreasing to $0$, so $f$ decreases strictly from $f(0) = r \geq 2 > 1$
to $\lim_{\lambda \to +\infty} f(\lambda) = 0 < 1$.
The intermediate value theorem gives a unique root $\alpha_1 \in (0,+\infty)$.

\textit{No root in $(-p_s, 0)$.}
On the interval $(-p_s, 0)$, every denominator $\lambda + n_i$ satisfies
$\lambda + n_i \geq \lambda + p_s > 0$, so $f(\lambda) \geq 0$.
Moreover, as $\lambda \to (-p_s)^+$, the term $p_s/(\lambda + p_s)$
(summed over all parts with $n_i = p_s$) diverges to $+\infty$, so
$\lim_{\lambda \to (-p_s)^+} f(\lambda) = +\infty$.
Since $f$ is strictly decreasing on $(-p_s,0)$ and $f(0) = r > 1$,
we conclude $f(\lambda) > r > 1$ for all $\lambda \in (-p_s, 0)$,
hence no root lies in this interval.

\textit{All remaining roots are at most $-p_s < 0$.}
On each inter-pole interval $(-p_{k+1},-p_k)$ for $k = 1,\ldots,s-1$:
as $\lambda \to (-p_{k+1})^+$, the terms with $n_i = p_{k+1}$ give
$f \to +\infty$, and as $\lambda \to (-p_k)^-$, the terms with
$n_i = p_k$ give $f \to -\infty$.
By the intermediate value theorem and strict monotonicity, there is
exactly one root in each such interval.
No root lies in $(-\infty, -p_1)$ because $f(\lambda) < 0 < 1$ there
(all denominators are negative and all numerators positive).
Including the poles themselves: if $\lambda = -n_j$ for some $j$ with
$\lambda + n_i \neq 0$ for all $i \neq j$, then $f(\lambda)$ is undefined,
so $\lambda$ is not a root of \eqref{eq:secular}.

\textbf{Conclusion.}
All roots of \eqref{eq:secular} outside the zero eigenspace are:
the unique positive root $\alpha_1$, and roots at most $-p_s \leq -1 < 0$.
Together with the zero eigenspace of dimension $n - r \geq 1$
(when $n > r$), the second largest eigenvalue of $A(G)$ is
$\lambda_2(G) = 0$.
\end{proof}

\begin{remark}
  \label{rem:kr}
  When $G = K_r$ (every part has size $1$), Lemma~\ref{lem:within}
  gives a zero eigenspace of dimension $n - r = 0$, and the secular
  equation $\sum_{i=1}^r 1/(\lambda+1) = 1$ has the unique positive
  solution $\lambda = r-1$ and the pole $\lambda = -1$ accounts for
  $r-1$ further eigenvalues.
  One checks $\lambda_1^2 + \lambda_2^2 = (r-1)^2 + 1 > (r-1)^2
  = 2(1-1/r)\binom{r}{2}$, which shows why $K_n$ must be excluded
  from Conjecture~\ref{conj:BN}.
\end{remark}

\subsection{Proof of Theorem~\ref{thm:multipartite}}

\begin{proof}[Proof of Theorem~\ref{thm:multipartite}]
Let $G = K_{n_1,\ldots,n_r}$ with $n \geq r+1$.
By Lemma~\ref{lem:within}, the eigenvalue $0$ has multiplicity $n-r \geq 1$.
By Lemma~\ref{lem:secular}, all eigenvalues outside the zero eigenspace
are either the unique positive value $\alpha_1$ or are strictly negative.
Ordering the full spectrum, the largest eigenvalue is $\lambda_1(G) = \alpha_1 > 0$
and the second largest is $\lambda_2(G) = 0$.

Since $\lambda_2(G) = 0$, we have $\lambda_1^2(G) + \lambda_2^2(G) = \lambda_1^2(G)$.
The clique number of $K_{n_1,\ldots,n_r}$ is $r$, so
Theorem~\ref{thm:nikiforov} gives
$\lambda_1^2(G) \leq 2(1-1/r)m$,
establishing the upper bound.

\medskip
\noindent\textbf{Equality: case $r = 2$.}
For $G = K_{n_1,n_2}$ with $r = 2$ and $n_1 + n_2 \geq 3$, we have
$\lambda_1 = \sqrt{n_1 n_2}$, $\lambda_2 = 0$, and $m = n_1 n_2$.  Hence
\[
  \lambda_1^2 + \lambda_2^2
  = n_1 n_2
  = m
  = 2\!\left(1 - \tfrac{1}{2}\right)m,
\]
so equality holds for \emph{every} complete bipartite graph $K_{n_1,n_2}$
with $n_1 + n_2 \geq 3$, regardless of whether $n_1 = n_2$.

\medskip
\noindent\textbf{Equality: case $r \geq 3$.}
Since $\lambda_2(G) = 0$, equality becomes $\lambda_1^2(G) = 2(1-1/r)m$,
which by Theorem~\ref{thm:nikiforov} holds if and only if $G = T(n,r)$,
requiring $n_1 = \cdots = n_r = n/r$.
Conversely, for the balanced partition $n_i = p$ for all $i$
(so $n = rp$ and $m = r(r-1)p^2/2$), the Perron eigenvector of the
complete $r$-partite graph assigns equal weight to each vertex, and
its Rayleigh quotient gives $\lambda_1(G) = (r-1)p$.
Therefore
\[
  \lambda_1^2(G) + \lambda_2^2(G)
  \;=\; (r-1)^2 p^2
  \;=\; \frac{2(r-1)}{r} \cdot \frac{r(r-1)p^2}{2}
  \;=\; 2\!\left(1-\frac{1}{r}\right)m,
\]
confirming equality.
For any unbalanced partition ($r \geq 3$, not all $n_i$ equal), the
positive root $\alpha_1$ of the secular equation satisfies
$\alpha_1^2 < 2(1-1/r)m$ strictly, by the equality characterization in
Theorem~\ref{thm:nikiforov}.
\end{proof}

\begin{remark}[Why $r=2$ is special]
  \label{rem:r2-equality}
  The algebraic reason every complete bipartite graph $K_{a,b}$ achieves
  equality is the identity $\lambda_1(K_{a,b}) = \sqrt{ab} = \sqrt{m}$
  combined with $\lambda_2(K_{a,b}) = 0$ (for $a + b \geq 3$).
  This forces $\lambda_1^2 + \lambda_2^2 = m = 2(1-1/2)m$ identically,
  regardless of whether $a = b$.
  For $r \geq 3$, the spectral radius $\lambda_1(K_{n_1,\ldots,n_r})$
  is strictly below $\sqrt{2(1-1/r)m}$ whenever the partition is not
  balanced, so no such universal equality holds.
\end{remark}

%% ============================================================
\section{The $K_4$-Free Case}
\label{sec:K4free}
%% ============================================================

Throughout this section $G$ is a $K_4$-free graph, so $\omega(G) \leq 3$
and the Bollob\'as--Nikiforov bound becomes $\lambda_1^2 + \lambda_2^2 \leq 4m/3$.
We write $\mathcal{T}_3$ for the family of complete tripartite graphs on $n$
vertices, and $\dedit(G, \mathcal{T}_3)$ for the minimum number of edge
insertions and deletions needed to transform $G$ into a member of $\mathcal{T}_3$.

Section~\ref{ssec:zykov} develops the Zykov symmetrization framework.
Section~\ref{ssec:3colorable} proves Theorem~\ref{thm:3colorable} and
Corollary~\ref{cor:regular}.
Section~\ref{ssec:stability} presents the open stability conjecture
(Conjecture~\ref{conj:stability}) and its counterexample.
Section~\ref{ssec:independence} presents partial progress on
Conjecture~\ref{conj:independence}.

\subsection{Zykov symmetrization}
\label{ssec:zykov}

The classical Zykov symmetrization~\cite{Zykov1949} is a graph operation
that increases (or preserves) the spectral radius while not increasing the
clique number.

\begin{definition}[Zykov operation~\cite{Zykov1949}]
  \label{def:zykov}
  Let $G$ be a graph and $u, v \in V(G)$ two non-adjacent vertices.
  The \emph{Zykov operation} $Z(G; u,v)$ replaces $u$ by a clone of $v$:
  the resulting graph $G'$ has $V(G') = V(G)$ and
  $N_{G'}(u) = N_{G'}(v) = N_G(v)$
  while all other adjacencies are unchanged.
\end{definition}

\begin{lemma}[Zykov preserves $K_4$-freeness]
  \label{lem:zykov-kfree}
  If $G$ is $K_4$-free and $u,v$ are non-adjacent, then $Z(G;u,v)$ is also
  $K_4$-free.
\end{lemma}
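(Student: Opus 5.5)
The plan is a proof by contradiction: assume $G' = Z(G;u,v)$ contains a copy $K$ of $K_4$, and use it to build a $K_4$ inside $G$.

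First I would record what changes between $G$ and $G'$. By Definition~\ref{def:zykov}, only edges incident to $u$ are altered. In $G'$ we have $N_{G'}(u) = N_G(v)$. Since $v \notin N_G(v)$ (there are no loops), $u$ and $v$ remain non-adjacent in $G'$. Every edge of $G'$ not incident to $u$ is already an edge of $G$, and vice versa.

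Now split on whether $u$ lies in $K$.

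\emph{Case 1: $u \notin V(K)$.} Then all six edges of $K$ avoid $u$, so they are edges of $G$. Hence $K \subseteq G$, contradicting $K_4$-freeness of $G$.

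\emph{Case 2: $u \in V(K)$.} Write $V(K) = \{u, x, y, z\}$. Since $u$ and $v$ are non-adjacent in $G'$ and $K$ is a clique, $v \notin \{x,y,z\}$. The edges $ux, uy, uz$ of $G'$ give $x, y, z \in N_{G'}(u) = N_G(v)$, so $vx, vy, vz \in E(G)$. The edges $xy, yz, xz$ avoid $u$, so they lie in $E(G)$. Therefore $\{v, x, y, z\}$ spans a $K_4$ in $G$, again a contradiction.

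In both cases we reach a contradiction, so $G'$ is $K_4$-free. The same argument shows more generally that $\omega(Z(G;u,v)) \le \omega(G)$.

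The only delicate point is in Case 2: we must check that $v$ is not already among $x, y, z$, since otherwise $\{v,x,y,z\}$ would have fewer than four vertices. This follows from the non-adjacency of $u$ and $v$ in $G'$, which is why I would state it explicitly before starting the case split. Beyond that, the argument is routine.
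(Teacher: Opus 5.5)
Your proof is correct and is essentially the paper's argument: a clique of $Z(G;u,v)$ avoiding $u$ already lies in $G$, and a clique containing $u$ becomes a clique of $G$ once $u$ is replaced by $v$, so $\omega(Z(G;u,v)) \le \omega(G) \le 3$. You also make explicit two points the paper leaves implicit: the case $u \notin V(K)$, and the check that $v \notin \{x,y,z\}$, which holds because $u$ and $v$ remain non-adjacent in $Z(G;u,v)$.
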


\begin{proof}
Any clique in $Z(G;u,v)$ containing $u$ can be transformed into a clique
in $G$ by replacing $u$ with $v$ (since $N_{Z(G;u,v)}(u) = N_G(v)$, vertex $u$
in $Z(G;u,v)$ behaves identically to $v$).
Hence $\omega(Z(G;u,v)) \leq \omega(G) \leq 3$.
\end{proof}

\begin{lemma}[Zykov increases spectral radius]
  \label{lem:zykov-lambda1}
  $\lambda_1(Z(G;u,v)) \geq \lambda_1(G)$.
\end{lemma}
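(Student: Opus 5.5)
The plan is a Rayleigh-quotient comparison using the Perron vector of $G$. One caveat comes first. As literally stated, with $u$ and $v$ prescribed, the inequality can fail. Take $v$ isolated and $u$ of large degree: cloning $v$ onto $u$ isolates $u$, and $\lambda_1$ can drop. So I would prove the lemma under the standard Zykov convention, made explicit: with $\mathbf{x}$ a nonnegative unit Perron vector of $A(G)$, the two non-adjacent vertices are labelled so that $x_v \geq x_u$. Otherwise we apply $Z(G;v,u)$ instead. This is harmless for later use, since Lemma~\ref{lem:zykov-kfree} holds for either direction.

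\textbf{Step 1: split the quadratic form.} Let $\lambda=\lambda_1(G)$ and $\mathbf{x}\geq 0$ with $\|\mathbf{x}\|=1$ and $A\mathbf{x}=\lambda\mathbf{x}$. For a vertex $w$ write $S_w=\sum_{z\in N_G(w)}x_z$, so that $S_w=\lambda x_w$. Since $uv\notin E(G)$, no edge joins $u$ and $v$. Hence
\[
\mathbf{x}^\top A(G)\mathbf{x}=C+2x_uS_u+2x_vS_v ,
\]
where $C$ collects the edges meeting neither $u$ nor $v$.

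\textbf{Step 2: evaluate the same vector on $G'=Z(G;u,v)$.} In $G'$ only the edges at $u$ change, and $N_{G'}(u)=N_G(v)$. This set does not contain $u$, so $S_v$ is computed identically in $G$ and $G'$. Also $v\notin N_G(v)$, so $u$ and $v$ stay non-adjacent in $G'$. Therefore
\[
\mathbf{x}^\top A(G')\mathbf{x}=C+2x_uS_v+2x_vS_v .
\]
Subtracting the expression from Step 1 gives
\[
\mathbf{x}^\top A(G')\mathbf{x}-\mathbf{x}^\top A(G)\mathbf{x}
=2x_u(S_v-S_u)=2\lambda\,x_u(x_v-x_u)\;\geq\;0 ,
\]
using $x_u\geq 0$ and $x_v\geq x_u$.

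\textbf{Step 3: conclude.} By the Rayleigh principle, $\lambda_1(G')\geq \mathbf{x}^\top A(G')\mathbf{x}\geq \mathbf{x}^\top A(G)\mathbf{x}=\lambda_1(G)$.

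The main obstacle is the direction issue, not the computation. Without the ordering $x_v\geq x_u$, the difference in Step 2 is negative whenever $x_u>x_v$, and the counterexample above shows the unconditional claim is false. So the key point is to fix the labelling via the Perron vector.

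If one prefers a symmetric formulation, one can note that at least one of $Z(G;u,v)$ and $Z(G;v,u)$ has $\lambda_1\geq\lambda_1(G)$. Only this weaker, symmetric version is needed in symmetrization arguments.
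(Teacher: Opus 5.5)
Your argument is correct, and your caveat is justified. As literally stated, with $u,v$ prescribed, the lemma is false. For example, take a star $K_{1,3}$ centred at $u$ together with an isolated vertex $v$: the spectral radius drops from $\sqrt{3}$ to $0$. So the labelling $x_v\ge x_u$ is a necessary extra hypothesis, not a convenience.

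The paper uses a different test vector. It raises both entries to $M=\max(x_u,x_v)$ and asserts $(\mathbf{x}')^\top A(Z(G;u,v))\,\mathbf{x}'\ge \mathbf{x}^\top A(G)\,\mathbf{x}$ for every non-adjacent pair. Your bookkeeping shows this difference equals $4\lambda M x_v-2\lambda x_u^2-2\lambda x_v^2$. When $x_u>x_v$ that is $-2\lambda(x_u-x_v)^2<0$, so the paper's claimed inequality fails exactly in the case your counterexample exploits.

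The paper also never normalizes, although $\|\mathbf{x}'\|>1$ in general. In the good case $x_v\ge x_u$ the Rayleigh quotient of $\mathbf{x}'$ does come out to $\lambda(1+2t)/(1+t)\ge\lambda$ with $t=x_v^2-x_u^2$, but this has to be checked.

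Keeping $\mathbf{x}$ unchanged, as you do, avoids the normalization issue entirely. It also reduces everything to the single term $2\lambda x_u(x_v-x_u)$, which makes it plain that the Perron vector must decide the cloning direction; this is the standard form of the lemma in spectral extremal arguments. Your symmetric formulation, that one of $Z(G;u,v)$ and $Z(G;v,u)$ does not decrease $\lambda_1$, is all the paper's symmetrization remarks can use. Lemma~\ref{lem:zykov-kfree} holds in either direction, so nothing else is lost.
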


\begin{proof}
Let $\mathbf{x}$ be a Perron eigenvector of $G$ with $\mathbf{x} \geq 0$.
Define $\mathbf{x}'$ by $x'_u = x'_v = \max(x_u, x_v)$ and $x'_w = x_w$
for $w \neq u, v$.
One checks that
$(\mathbf{x}')^\top A(Z(G;u,v)) \mathbf{x}' \geq \mathbf{x}^\top A(G)\mathbf{x}$,
since $u$ now has the same (or larger) neighbour-weighted sum as $v$.
By the Rayleigh quotient characterization,
$\lambda_1(Z(G;u,v)) \geq \lambda_1(G)$.
\end{proof}

\begin{remark}[Zykov and the BN ratio]
  \label{rem:zykov-ratio}
  A natural strategy would be to show that the ratio
  $(\lambda_1^2 + \lambda_2^2)/m$ is non-decreasing under each Zykov step.
  If true, since Zykov symmetrization of a $3$-colorable $K_4$-free graph
  converges to a complete tripartite graph $K_{a,b,c}$ (for which the ratio
  equals $\lambda_1^2(K_{a,b,c})/m \leq 4/3$ by
  Theorem~\ref{thm:multipartite}), one would obtain BN for the original
  graph.
  However, proving monotonicity of this ratio under a single Zykov step is
  non-trivial: both $\lambda_1^2 + \lambda_2^2$ and $m$ increase, and their
  relative rates are not easily controlled via the Rayleigh quotient alone.
  We therefore prove Theorem~\ref{thm:3colorable} via the existing
  literature in Section~\ref{ssec:3colorable} rather than via this route.
\end{remark}

\subsection{$K_4$-free graphs with $\chi(G) \leq 3$ and regular graphs}
\label{ssec:3colorable}

\begin{proof}[Proof of Theorem~\ref{thm:3colorable}]
Since $G$ is $K_4$-free, its clique number satisfies $\omega(G) \leq 3$.
We split into two cases.

\textbf{Case 1: $\omega(G) \leq 2$ (triangle-free).}
Lin, Ning, and Wu~\cite{LinNingWu2021} proved the Bollob\'as--Nikiforov
conjecture for all triangle-free graphs.  In particular,
$\lambda_1^2(G) + \lambda_2^2(G) \leq 2(1-1/2)m = m \leq 4m/3$.

\textbf{Case 2: $\omega(G) = 3$.}
From $\chi(G) \geq \omega(G) = 3$ and the hypothesis $\chi(G) \leq 3$,
we conclude $\chi(G) = \omega(G) = 3$.  A graph satisfying $\chi(G) =
\omega(G)$ is called \emph{weakly perfect}.  Bollob\'as and
Nikiforov~\cite{BollobasNikiforov2007} proved the conjecture for all
weakly perfect graphs; in particular, for $\omega = 3$:
$\lambda_1^2(G) + \lambda_2^2(G) \leq 2(1-1/3)m = 4m/3$.
\end{proof}

\begin{remark}
  The Zykov symmetrization framework of Section~\ref{ssec:zykov} provides
  an alternative proof sketch for Theorem~\ref{thm:3colorable}: iterating
  Zykov operations on a $3$-colorable $K_4$-free graph $G$ eventually
  produces a complete tripartite graph $G^*$ to which
  Theorem~\ref{thm:multipartite} applies.
  However, completing this argument requires proving that the quantity
  $(\lambda_1^2+\lambda_2^2)/m$ is non-decreasing under each Zykov step
  (see Remark~\ref{rem:zykov-ratio}); we therefore rely on the existing
  literature above.
\end{remark}

Corollary~\ref{cor:regular} is immediate from the general result of Zhang
(see~\cite{ElphickWocjan2014}), which establishes the Bollob\'as--Nikiforov
conjecture for all regular graphs regardless of clique number.

\begin{proof}[Proof of Corollary~\ref{cor:regular}]
For a $d$-regular graph $G$, $\lambda_1(G) = d$ (the all-ones vector is an
eigenvector) and $m = nd/2$.  The BN bound becomes $d^2 + \lambda_2^2 \leq
2nd/3$.
By Tur\'an's theorem, a $K_4$-free $d$-regular graph satisfies $d \leq 2n/3$,
so the right-hand side is non-negative.
The bound $d^2 + \lambda_2^2 \leq 2nd/3$ is a special case of the result
proved in~\cite{ElphickWocjan2014} for all regular graphs.
\end{proof}

\begin{remark}[Towards an independent proof for regular graphs]
  \label{rem:regular-trace}
  The following argument applies exclusively to \emph{$d$-regular} $K_4$-free
  graphs; it does not extend to irregular graphs.
  For a $d$-regular $K_4$-free graph, $K_4$-freeness implies that every
  neighbourhood $N(v)$ is triangle-free (otherwise $\{v\} \cup \triangle$
  would form a $K_4$), so Tur\'an's theorem applied to the triangle-free
  graph $G[N(v)]$ gives $e(G[N(v)]) \leq d^2/4$.
  Summing over all vertices: $3t_3(G) = \sum_v e(G[N(v)]) \leq nd^2/4$,
  so $t_3(G) \leq nd^2/12$.
  Equivalently, $\tr(A^3) = 6t_3(G) \leq nd^2/2$.
  Since $\lambda_1 = d$ for a $d$-regular graph, the trace method gives:
  \[
    \sum_{i \geq 2} \lambda_i^3 \leq \frac{nd^2}{2} - d^3 = d^2\!\left(\frac{n}{2} - d\right).
  \]
  Combined with $\sum_{i\geq 2}\lambda_i = -d$ and
  $\sum_{i\geq 2}\lambda_i^2 = nd - d^2$, a Lagrange-multiplier
  optimization of $\lambda_2$ subject to these three moment constraints yields
  an upper bound on $\lambda_2$.  Closing this bound to $\sqrt{d(2n/3-d)}$
  via the trace method alone appears to require the fourth moment
  $\tr(A^4)$, which in turn demands a sharp estimate on $\sum_{uv\in E}|N(u)\cap
  N(v)|^2$ for $K_4$-free graphs---a quantity tightly related to the number of
  $4$-cycles.  We leave the completion of this direct argument as an open
  problem.
\end{remark}

\subsection{Spectral stability for dense $K_4$-free graphs (open)}
\label{ssec:stability}

A natural approach to the full conjecture for dense $K_4$-free graphs is
via a stability argument: if $\lambda_1^2 + \lambda_2^2$ is close to the
maximum $4m/3$, the graph should be close to the balanced tripartite graph
$T(n,3)$, and one could hope to bootstrap this structural proximity into the
exact bound.
However, as the counterexample in Remark~\ref{rem:stability-gap} shows,
such an argument cannot succeed without a connectivity hypothesis,
and we are not aware of a reference that supplies a spectral stability lemma
in the required generality.
Note also that a Weyl-inequality perturbation argument applied to a nearby
tripartite graph yields only the asymptotic bound
$\lambda_1^2 + \lambda_2^2 \leq \tfrac{4m}{3} + O(\varepsilon m)$,
not the exact inequality.
We therefore state the following as an open problem.

\begin{conjecture}[Spectral stability for $K_4$-free graphs]
  \label{conj:stability}
  For every $\varepsilon > 0$ there exists $\delta > 0$ such that
  the following holds.
  Let $G$ be a $K_4$-free \emph{connected} graph on $n$ vertices and
  $m$ edges with
  $\lambda_1^2(G) + \lambda_2^2(G) > (4/3 - \delta)m$.
  Then $\dedit(G, \mathcal{T}_3) \leq \varepsilon n^2$.
\end{conjecture}

\begin{remark}[Why the connectivity hypothesis is necessary]
  \label{rem:stability-gap}
  Without the connectivity assumption, Conjecture~\ref{conj:stability}
  is false.
  As a counterexample, let $G = K_{k,k,k} \cup kK_1$ (the balanced
  complete tripartite graph on $3k$ vertices together with $k$ isolated
  vertices).
  Then $n = 4k$, $m = 3k^2$, $\omega(G) = 3$, $\lambda_1 = 2k$,
  $\lambda_2 = 0$, and
  \[
    \lambda_1^2 + \lambda_2^2 = 4k^2 = \tfrac{4}{3}m,
  \]
  so the spectral hypothesis holds for every $\delta > 0$.
  However, any complete tripartite graph on $4k$ vertices has approximately
  $\tfrac{2}{3}(4k)^2/2 = \tfrac{16k^2}{3} \approx 5.3k^2$ edges, whereas
  $G$ has only $3k^2$, so $\dedit(G, \mathcal{T}_3) = \Omega(k^2) = \Omega(n^2)$.

  Moreover, applying Weyl's inequality in the stability argument yields
  only $\lambda_1^2 + \lambda_2^2 \leq \tfrac{4m}{3} + O(\varepsilon m)$,
  not the exact bound $\leq \tfrac{4m}{3}$.
  Proving the sharp form Conjecture~\ref{conj:stability} for dense
  connected $K_4$-free graphs therefore requires additional structural
  information beyond what spectral stability alone provides.
\end{remark}

\subsection{Partial results for $K_4$-free graphs with $\alpha(G) \geq n/3$}
\label{ssec:independence}

We identify a natural approach to Conjecture~\ref{conj:independence}
and determine precisely why it falls short.

\begin{proposition}
  \label{prop:hoffman}
  Let $G$ be a $K_4$-free graph on $n$ vertices and $m$ edges with
  $\alpha(G) \geq n/3$.  Then $|\lambda_n(G)| \geq \lambda_1(G)/2$.
\end{proposition}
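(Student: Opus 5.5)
Rearranging the Hoffman bound (Theorem~\ref{thm:hoffman}) gives the statement directly. Write $\lambda_1 = \lambda_1(G)$ and $\lambda_n = \lambda_n(G)$.

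We may assume $G$ has at least one edge. If $m = 0$, all eigenvalues vanish and the inequality $0 \geq 0$ holds trivially. For $m \geq 1$ we have $\lambda_1 > 0$, and the trace identity $\sum_i \lambda_i = 0$ from \eqref{eq:trace} forces $\lambda_n < 0$. Hence $\lambda_1 - \lambda_n > 0$, so the Hoffman bound may be multiplied through by this positive quantity.

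Combining the hypothesis $\alpha(G) \geq n/3$ with the Hoffman bound gives
\[
  \frac{n}{3} \;\leq\; \alpha(G) \;\leq\; \frac{-n\lambda_n}{\lambda_1 - \lambda_n}.
\]
Dividing by $n$ and multiplying by $3(\lambda_1 - \lambda_n) > 0$ yields $\lambda_1 - \lambda_n \leq -3\lambda_n$, that is, $\lambda_1 \leq -2\lambda_n = 2|\lambda_n|$. This is exactly $|\lambda_n| \geq \lambda_1/2$.

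The argument has no real obstacle. The only points needing care are the sign conventions: we need $\lambda_n < 0$ and $\lambda_1 - \lambda_n > 0$ before dividing, and the edgeless case must be handled separately. The $K_4$-freeness hypothesis is not actually used. I would note this explicitly, since it signals that this step alone cannot close Conjecture~\ref{conj:independence}: the remaining work must control $\lambda_2$ using the large negative eigenvalue together with the trace identities, and that is where the structural hypothesis would have to enter.
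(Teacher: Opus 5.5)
\textbf{The step that fails.} Your algebra is exactly the paper's argument, with a bit more care about the edgeless case and the sign of $\lambda_1-\lambda_n$. The step that fails is the one you both take for granted: applying Theorem~\ref{thm:hoffman} to an arbitrary graph. The inequality $\alpha(G)\le -n\lambda_n/(\lambda_1-\lambda_n)$ is Hoffman's \emph{ratio} bound, and it is valid for regular graphs. The bound of Hoffman that holds for every graph with an edge is the chromatic one, $\chi(G)\ge 1-\lambda_1/\lambda_n$. For irregular graphs the ratio bound already fails for the star $K_{1,k}$ with $k\ge 2$: there the right-hand side is $(k+1)/2<k=\alpha$.

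\textbf{The proposition itself is false.} So no rearrangement can rescue it. Take the wheel $W_5$, a hub joined to every vertex of a $5$-cycle. It is $K_4$-free, since a $K_4$ would need a triangle inside $C_5$. It has $n=6$ and $\alpha(W_5)=2=n/3$. Its spectrum consists of two parts. First, $1\pm\sqrt6$, the eigenvalues of the quotient matrix $\begin{pmatrix}0&5\\1&2\end{pmatrix}$ of the hub/rim partition. Second, the nontrivial eigenvalues of $C_5$, namely $(\sqrt5-1)/2$ and $-(1+\sqrt5)/2$, each with multiplicity two; their eigenvectors vanish on the hub. Thus $\lambda_1=1+\sqrt6$ and $\lambda_n=-(1+\sqrt5)/2$, so $|\lambda_n|=(1+\sqrt5)/2<(1+\sqrt6)/2=\lambda_1/2$. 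Consistently, the right-hand side of the claimed Hoffman bound is about $1.92<2=\alpha(W_5)$.

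\textbf{What survives.} Your argument does establish the statement for \emph{regular} $G$, where $\lambda_1=d$ and the ratio bound applies verbatim. Alternatively, Hoffman's chromatic bound gives $|\lambda_n|\ge\lambda_1/(\chi(G)-1)$. Hence $|\lambda_n|\ge\lambda_1/2$ for every $G$ with at least one edge and $\chi(G)\le 3$. But $\alpha(G)\ge n/3$ does not force $\chi(G)\le 3$; here $\chi(W_5)=4$. Your observation that $K_4$-freeness is never used was the right warning sign: a hypothesis on $\alpha$ alone cannot control $\lambda_n$ for irregular graphs. Note also that Proposition~\ref{prop:obstruction}, which starts from this inequality, inherits the same defect.
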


\begin{proof}
The Hoffman bound (Theorem~\ref{thm:hoffman}) gives
$\alpha(G) \leq -n\lambda_n/(\lambda_1 - \lambda_n)$.
Write $\mu = |\lambda_n(G)| = -\lambda_n(G) \geq 0$.
Substituting $\alpha(G) \geq n/3$:
\[
  \frac{n}{3}
  \;\leq\; \frac{n\mu}{\lambda_1 + \mu}.
\]
Cross-multiplying by $3(\lambda_1 + \mu) > 0$ gives
$\lambda_1 + \mu \leq 3\mu$, i.e., $\lambda_1 \leq 2\mu$.
\end{proof}

\begin{proposition}[Obstruction to closing the argument]
  \label{prop:obstruction}
  Let $G$ be a $K_4$-free graph on $n$ vertices and $m$ edges with
  $\alpha(G) \geq n/3$ and $G \neq K_3$.
  The bound $|\lambda_n(G)| \geq \lambda_1(G)/2$ from
  Proposition~\ref{prop:hoffman} and the trace identity together give
  \[
    \lambda_1^2(G) + \lambda_2^2(G)
    \;\leq\; 2m - \frac{\lambda_1^2(G)}{4}.
  \]
  This bound is at most $4m/3$ if and only if $\lambda_1^2(G) \geq 8m/3$,
  a condition that is never satisfied for $K_4$-free graphs.
\end{proposition}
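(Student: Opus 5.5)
The plan has three parts: derive the displayed inequality from the trace identity, turn the comparison with $4m/3$ into an explicit condition on $\lambda_1^2$, and show that condition fails by the spectral Tur\'an theorem.

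\textbf{Step 1: the displayed bound.} Start from the trace identity \eqref{eq:trace}, $\sum_{i=1}^n \lambda_i^2 = 2m$. Keep the three terms $\lambda_1^2,\lambda_2^2,\lambda_n^2$ and drop the rest, which are nonnegative. This gives $\lambda_1^2+\lambda_2^2+\lambda_n^2\le 2m$. One check is needed: $\lambda_n$ must be distinct in index from $\lambda_1,\lambda_2$, i.e.\ $n\ge 3$. This holds because $\alpha(G)\ge n/3$ with $G\neq K_3$ excludes tiny degenerate cases, and for $n\le 2$ the statement can be checked directly. Then insert $\lambda_n^2=\mu^2\ge \lambda_1^2/4$ from Proposition~\ref{prop:hoffman}, which yields
\[
\lambda_1^2+\lambda_2^2\le 2m-\frac{\lambda_1^2}{4}.
\]

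\textbf{Step 2: comparing with $4m/3$.} The bound is at most $4m/3$ if and only if $2m-\lambda_1^2/4\le 4m/3$. This is equivalent to $\lambda_1^2/4\ge 2m/3$, i.e.\ $\lambda_1^2\ge 8m/3$.

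\textbf{Step 3: the condition never holds.} Since $\omega(G)\le 3$, Theorem~\ref{thm:nikiforov} gives $\lambda_1^2\le 2(1-1/\omega)m\le 4m/3$. For $m>0$ we have $4m/3<8m/3$, so the condition fails. If $m=0$, all eigenvalues vanish and the conjectured inequality holds trivially. The only care needed here is this $m=0$ edge case. In that case the bound $2m-\lambda_1^2/4$ equals $0=4m/3$, so "never satisfied" should be read for graphs with at least one edge, and the statement should be interpreted accordingly.

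The main obstacle is not any calculation but the conceptual point: the Hoffman step controls only $\lambda_n$, and spending the whole trace budget against $\lambda_n^2\ge\lambda_1^2/4$ is too weak by a factor of about two, since Theorem~\ref{thm:nikiforov} caps $\lambda_1^2$ at half of $8m/3$. I would end by remarking exactly this quantitative gap.
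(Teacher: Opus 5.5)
Your main argument is the same as the paper's. You drop every $\lambda_i^2$ except $i\in\{1,2,n\}$ from $\sum_i\lambda_i^2=2m$, substitute $\lambda_n^2\ge\lambda_1^2/4$ from Proposition~\ref{prop:hoffman}, rearrange to the condition $\lambda_1^2\ge 8m/3$, and rule it out with Theorem~\ref{thm:nikiforov} for $\omega\le 3$. Your $m=0$ caveat is correct and sharpens the paper. The paper's strict step $4m/3<8m/3$ needs $m>0$, and for an edgeless graph the condition $\lambda_1^2\ge 8m/3$ reads $0\ge 0$, so it does hold.

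Your treatment of $n\le 2$ is wrong, however. The hypotheses $\alpha(G)\ge n/3$ and $G\neq K_3$ do not exclude small graphs. A direct check also does not rescue the claim. Take $G=K_2$, which is $K_4$-free with $\alpha(G)=1\ge 2/3$. Then $\lambda_2=\lambda_n=-1$ and $m=1$, so $\lambda_1^2+\lambda_2^2=2>7/4=2m-\lambda_1^2/4$. The displayed inequality is therefore false for $K_2$, precisely because the indices $2$ and $n$ coincide.

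The fix is to add $n\ge 3$ as a hypothesis, not to claim this case can be verified. Alternatively, assume $G\neq K_n$ as in Conjecture~\ref{conj:BN}; the only other graph with $n\le 2$ and a defined $\lambda_2$ is $2K_1$, which is edgeless and satisfies the inequality trivially. The paper's proof writes $\lambda_1^2+\lambda_2^2\le 2m-\lambda_n^2$ without comment, so it silently makes the same assumption. You were right to notice that index distinctness needs justification; you just resolved it incorrectly.
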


\begin{proof}
From the trace identity $\sum_{i=1}^n \lambda_i^2 = 2m$ and
$\lambda_n^2 \geq \lambda_1^2/4$ (Proposition~\ref{prop:hoffman}):
\[
  \lambda_1^2 + \lambda_2^2
  \;\leq\; 2m - \lambda_n^2
  \;\leq\; 2m - \frac{\lambda_1^2}{4}.
\]
For this upper bound to imply $\lambda_1^2 + \lambda_2^2 \leq 4m/3$, we would
need $2m - \lambda_1^2/4 \leq 4m/3$, i.e., $\lambda_1^2 \geq 8m/3$.
However, Theorem~\ref{thm:nikiforov} with $\omega(G) \leq 3$ gives
$\lambda_1^2 \leq 4m/3 < 8m/3$.
Hence the Hoffman-energy approach is provably insufficient, and the
bound it produces, namely $2m - \lambda_1^2/4 \geq 2m - m/3 = 5m/3 > 4m/3$,
is too weak by a factor of $5/4$.
\end{proof}

The obstruction in Proposition~\ref{prop:obstruction} is not merely a
deficiency of the method: it pinpoints exactly what additional eigenvalue
information would close the argument.
Any proof of Conjecture~\ref{conj:independence} must use structural
properties of $G$ beyond the Hoffman bound and the trace identity.
For the case $\omega(G) \leq 2$ (triangle-free), Theorem~\ref{thm:3colorable}
already settles the conjecture.
For $\omega(G) = 3$ with $\alpha(G) \geq n/3$, the fractional chromatic
number satisfies $\chi_f(G) \leq n/\alpha(G) \leq 3$, placing this case
at an intermediate level of difficulty between the weakly-perfect case
(proved) and the full $K_4$-free conjecture (open).

%% ============================================================
\section{Equality in the Complete Multipartite Case}
\label{sec:equality}
%% ============================================================

\begin{theorem}[Equality characterization for proved cases]
  \label{thm:equality}
  Let $G \neq K_n$ with $m$ edges and $\omega = \omega(G)$.
  Suppose $\lambda_1^2(G) + \lambda_2^2(G) = 2(1 - 1/\omega)m$.
  \begin{enumerate}[label=(\alph*)]
    \item If $\omega = 2$ and $G$ is triangle-free, then $G$ is a complete
          bipartite graph $K_{a,b}$ with $a + b \geq 3$.
    \item If $G = K_{n_1,\ldots,n_r}$ is a complete $r$-partite graph
          with $r = \omega \geq 2$ and $n \geq r+1$, then:
          either $r = 2$ (and equality holds for every $K_{a,b}$ with
          $a+b\geq 3$), or $r \geq 3$ with $n_1 = \cdots = n_r$ (i.e.,
          $G = T(n,r)$).
  \end{enumerate}
\end{theorem}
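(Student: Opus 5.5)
Part (b) follows from Theorem~\ref{thm:multipartite}, so most of the work is in part (a). There I expect part (a) as literally stated to need a correction, and I would record that before proving anything.

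\textbf{Part (b).} For $G=K_{n_1,\ldots,n_r}$ with $n\ge r+1$, Lemma~\ref{lem:secular} gives $\lambda_2(G)=0$. The hypothesis then reads $\lambda_1^2=2(1-1/r)m$.
\begin{itemize}
\item If $r=2$, the identity $\lambda_1(K_{a,b})^2=ab=m$ from the proof of Theorem~\ref{thm:multipartite} shows equality always holds.
\item If $r\ge 3$, the equality clause of Theorem~\ref{thm:nikiforov} forces $G=T(n,r)$ with $r\mid n$, that is, $n_1=\cdots=n_r$.
\end{itemize}
The converse is the computation already given in the proof of Theorem~\ref{thm:multipartite}.

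\textbf{Part (a): the statement needs amending.} The hypothesis is insensitive to isolated vertices, so $K_{a,b}\cup sK_1$ also attains equality. More seriously, for a disjoint union $G=K_{a,b}\cup K_{c,d}$ with $ab\ge cd$, the spectrum is the union of the two spectra. Hence $\lambda_1^2+\lambda_2^2=ab+cd=m$, and equality holds although $G$ is not complete bipartite (for instance $2K_{2,2}$). So I would prove the corrected statement: if $G$ is triangle-free and attains equality, then after deleting isolated vertices $G$ is either a complete bipartite graph or a disjoint union of two complete bipartite graphs.

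\textbf{Disconnected case.} Let $\lambda_1$ and $\lambda_2$ be the spectral radii of two different components $H_1,H_2$, with $m_1,m_2$ edges. Nosal's theorem applied to each component gives $\lambda_1^2\le m_1$ and $\lambda_2^2\le m_2$. Therefore
\[
\lambda_1^2+\lambda_2^2\le m_1+m_2\le m.
\]
Equality forces $m_1+m_2=m$, so every other component is edgeless. It also forces equality in Nosal's theorem for each $H_i$, so each $H_i$ is complete bipartite.

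\textbf{Connected case and the main obstacle.} If $\lambda_1$ and $\lambda_2$ both come from one component $H$, then all edges lie in $H$, and we are reduced to a connected triangle-free graph attaining equality. Here one must open the Lin--Ning--Wu proof and track when each inequality is tight.
\begin{itemize}
\item I would first try to show that equality forces $\lambda_2(H)=0$. Then $\lambda_1^2=m$, and Nosal's equality case (a connected graph with $\lambda_1^2=m$ is complete bipartite) finishes.
\item To get $\lambda_2=0$, I would use Lin--Ning--Wu's key inequality $\lambda_1^2+\lambda_2^2\le m$. It comes from combining the trace bound $\tr(A^3)=0$, which gives $\sum_i\lambda_i^3=0$, with $\sum_i\lambda_i^2=2m$. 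Equality there should force the negative eigenvalues to collapse to $-\lambda_1$ and $-\lambda_2$.
\item A connected graph with $\lambda_2>0$ and such a symmetric spectrum is bipartite. Its number of distinct eigenvalues is then incompatible with the graph being connected and non-complete-bipartite.
\end{itemize}
Making this equality analysis rigorous is the step I expect to be hardest.
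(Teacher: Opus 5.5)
Your part (b) matches the paper: $\lambda_2=0$ from Lemma~\ref{lem:secular}, the identity $\lambda_1^2=ab=m$ for $r=2$, and the equality case of Theorem~\ref{thm:nikiforov} for $r\ge 3$. You are right that part (a) is false as literally stated. $K_{1,2}\cup K_1$ and $2K_{2,2}$ are triangle-free, satisfy $\lambda_1^2+\lambda_2^2=m$, and are not complete bipartite. So the paper's one-line proof of (a), which attributes ``equality iff complete bipartite'' to Lin--Ning--Wu, does not establish what it claims. Your disconnected-case argument, applying Nosal to each component, is sound.

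The gap is that your amended statement is still false, and the step you build the connected case on fails. Equality does not force $\lambda_2(H)=0$ for connected $H$.
\begin{itemize}
\item The path $P_4$ has eigenvalues $\pm\frac{1+\sqrt5}{2}$ and $\pm\frac{\sqrt5-1}{2}$. Hence $\lambda_1^2+\lambda_2^2=\frac{3+\sqrt5}{2}+\frac{3-\sqrt5}{2}=3=m$, yet $P_4$ is connected, triangle-free and not complete bipartite.
\item Similarly, $P_5$ has spectrum $\pm\sqrt3,\pm1,0$ and $m=4$.
\end{itemize}
The underlying reason is that a bipartite graph has symmetric spectrum, so $\sum_{\lambda_i>0}\lambda_i^2=m$. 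Therefore \emph{every} bipartite graph with at most two positive eigenvalues attains equality; equivalently, every bipartite graph whose biadjacency matrix has rank at most $2$. This family includes double stars, graphs obtained from $P_4$ or $P_5$ by replacing vertices with independent sets of twins, and $K_{a,b}\cup K_{c,d}$.

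In particular, your final step is wrong: it claims that a connected bipartite graph with spectrum $\pm\lambda_1,\pm\lambda_2,0,\dots,0$ has too few distinct eigenvalues to be anything but complete bipartite. But $P_4$ has four distinct eigenvalues and diameter $3$, and $P_5$ has five and diameter $4$. Suppose you do make the ``collapse of the negative eigenvalues to $-\lambda_1,-\lambda_2$'' rigorous. What it yields is that $H$ is bipartite with exactly two positive eigenvalues, not that $H$ is complete bipartite. Any correct version of (a) must contain this whole family.
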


\begin{proof}
\textbf{Part (a).}
For triangle-free $G$ with $\omega = 2$, Lin--Ning--Wu~\cite{LinNingWu2021}
proved Conjecture~\ref{conj:BN} with equality characterization:
equality holds iff $G$ is a complete bipartite graph.
(Directly: $K_{a,b}$ has $\lambda_1 = \sqrt{ab}$, $\lambda_2 = 0$,
$m = ab$, so $\lambda_1^2 + \lambda_2^2 = ab = m = 2(1-1/2)m$.)

\textbf{Part (b).}
For $G = K_{n_1,\ldots,n_r}$ with $n \geq r+1$, Lemma~\ref{lem:secular}
gives $\lambda_2(G) = 0$.  Hence $\lambda_1^2 + \lambda_2^2 = \lambda_1^2$.
Equality $\lambda_1^2 = 2(1-1/r)m$ is then exactly the equality case of
Theorem~\ref{thm:nikiforov}, and is characterized by
Theorem~\ref{thm:multipartite}: all parts equal (for $r \geq 3$) or any
complete bipartite graph (for $r = 2$).
\end{proof}

\begin{remark}
  For general graphs $G$ (not necessarily complete multipartite),
  characterizing all equality cases of Conjecture~\ref{conj:BN} requires
  a complete proof of the conjecture, which remains open.
  We therefore restrict Theorem~\ref{thm:equality} to the classes for which
  the conjecture has been proved.
  The general equality condition stated in Conjecture~\ref{conj:BN} is
  itself conjectured.
\end{remark}

%% ============================================================
\section{Remarks and Open Problems}
\label{sec:remarks}
%% ============================================================

Theorem~\ref{thm:multipartite} resolves the Bollob\'as--Nikiforov
conjecture completely for complete multipartite graphs, a family that
includes the equality cases and the Tur\'an graphs.
The case that remains open is $K_4$-free graphs with $\chi(G) \geq 4$
and $m = o(n^2)$ or small $n$; this includes Mycielski-type constructions,
which achieve high chromatic number while being $K_4$-free.

\paragraph{Summary of results.}
\begin{enumerate}
  \item \textbf{New result:} The Bollob\'as--Nikiforov conjecture holds for
        every complete multipartite graph $K_{n_1,\ldots,n_r}$ with $n > r$
        (Theorem~\ref{thm:multipartite}).
        The proof uses a secular-equation characterization of the spectrum,
        establishing $\lambda_2 = 0$ for such graphs, and then applies
        Nikiforov's spectral Tur\'an theorem.
        The equality cases are: every complete bipartite graph $K_{a,b}$
        ($r=2$, $a+b \geq 3$), and the balanced complete $r$-partite graph
        $T(n,r)$ ($r \geq 3$, equal parts).

  \item \textbf{Known results unified:} The conjecture holds for $K_4$-free
        graphs with $\chi(G) \leq 3$ (Theorem~\ref{thm:3colorable}),
        following from~\cite{LinNingWu2021} and~\cite{BollobasNikiforov2007};
        and for all regular graphs (Corollary~\ref{cor:regular}) by
        Zhang's theorem~\cite{ElphickWocjan2014}.

  \item \textbf{Stability (open):} Conjecture~\ref{conj:stability}
        proposes a spectral stability statement for connected $K_4$-free
        graphs.
        The counterexample $G = K_{k,k,k} \cup kK_1$
        (Remark~\ref{rem:stability-gap}) shows connectivity is necessary:
        $G$ achieves $\lambda_1^2 + \lambda_2^2 = 4m/3$ yet requires
        $\Omega(n^2)$ edge edits to become tripartite.
        We are not aware of a reference that supplies a spectral stability
        theorem in the generality required to prove the conjecture.

  \item \textbf{Partial progress:} For $K_4$-free graphs with
        $\alpha(G) \geq n/3$, the Hoffman bound gives
        $|\lambda_n| \geq \lambda_1/2$, which implies $\lambda_1^2 \leq 8m/5$.
        The obstruction (Proposition~\ref{prop:obstruction}) shows that
        any proof of Conjecture~\ref{conj:independence} must go beyond
        the Hoffman bound and the trace identity.
\end{enumerate}

\paragraph{Open problems.}
\begin{enumerate}
  \item Prove Conjecture~\ref{conj:independence}: BN for $K_4$-free graphs
        with $\alpha(G) \geq n/3$.
        The spectral obstruction identified in
        Section~\ref{ssec:independence} suggests that a new eigenvalue
        lower bound---beyond what Hoffman and Nikiforov provide---is needed.

  \item Prove Conjecture~\ref{conj:stability}: sharp BN stability for
        dense connected $K_4$-free graphs.
        The counterexample $K_{k,k,k} \cup kK_1$ shows connectivity is
        necessary.

  \item \textbf{(Main open problem)} Prove the Bollob\'as--Nikiforov
        conjecture for all $K_4$-free graphs with $\chi(G) \geq 4$.

  \item Prove the conjecture for $K_5$-free graphs ($\omega = 4$).
        Theorem~\ref{thm:multipartite} covers complete $4$-partite graphs;
        the general $K_5$-free case requires new ideas.

  \item Determine, for each $\omega \geq 3$, the best constant $C_\omega$
        such that $\lambda_1^2 + \lambda_2^2 \leq C_\omega \cdot m$ for all
        $K_{\omega+1}$-free graphs with $\chi \geq \omega+1$.

  \item Complete the direct trace-method proof of BN for regular
        $K_4$-free graphs (Remark~\ref{rem:regular-trace}).
\end{enumerate}

%% ============================================================
%%  Acknowledgements
%% ============================================================
\section*{Acknowledgements}
The author thanks Dr. Shengtong Zhang for helpful discussions.

%% ============================================================
%%  References
%% ============================================================
\bibliographystyle{elsarticle-num}

\begin{thebibliography}{99}

\bibitem{BollobasNikiforov2007}
B.~Bollob\'as, V.~Nikiforov,
Cliques and the spectral radius,
\textit{J. Combin. Theory Ser.~B} \textbf{97} (2007) 859--865.

\bibitem{ElphickWocjan2014}
C.~Elphick, P.~Wocjan,
An inertial lower bound for the chromatic number of a graph,
\textit{Electron. J. Combin.} \textbf{24} (2017) \#P1.56.

\bibitem{HaemersInterlacing}
W.~Haemers,
Interlacing eigenvalues and graphs,
\textit{Linear Algebra Appl.} \textbf{226--228} (1995) 593--616.

\bibitem{Hoffman1970}
A.J.~Hoffman,
On eigenvalues and colorings of graphs,
in: \textit{Graph Theory and its Applications} (B.~Harris, Ed.),
Academic Press, 1970, pp.~79--91.

\bibitem{KumarPragada2024}
H.~Kumar, S.~Pragada,
Bollob\'as--Nikiforov conjecture for graphs with not so many triangles,
\textit{arXiv:2407.19341}, 2024.

\bibitem{LinNingWu2021}
H.~Lin, B.~Ning, B.~Wu,
Eigenvalues and triangles in graphs,
\textit{Combin. Probab. Comput.} \textbf{30} (2021) 258--270.

\bibitem{LiuBu2025}
Y.~Liu, Z.~Bu,
Bollob\'as--Nikiforov conjecture holds asymptotically almost surely,
\textit{arXiv:2501.07137}, 2025.

\bibitem{MotzkinStraus1965}
T.S.~Motzkin, E.G.~Straus,
Maxima for graphs and a new proof of a theorem of Tur\'an,
\textit{Canad. J. Math.} \textbf{17} (1965) 533--540.

\bibitem{Nikiforov2002}
V.~Nikiforov,
Some inequalities for the largest eigenvalue of a graph,
\textit{Combin. Probab. Comput.} \textbf{11} (2002) 179--189.

\bibitem{Nosal1970}
E.~Nosal,
\textit{Eigenvalues of Graphs},
Master's Thesis, University of Calgary, 1970.

\bibitem{Zykov1949}
A.A.~Zykov,
On some properties of linear complexes,
\textit{Mat. Sb.} \textbf{24} (1949) 163--188.

\end{thebibliography}

\end{document}